\declaretheorem{theorem}
\declaretheorem{lemma}
\declaretheorem{corollary}
\declaretheorem{proposition}
\declaretheorem{definition}
\declaretheorem{remark}
\declaretheorem[name=Acknowledgements,numbered=no]{ack}
\newcommand{\M}{\mathcal{M}}
\newcommand{\R}{\mathbb{R}}
\newcommand{\Z}{\mathbb{Z}}
\newcommand{\N}{\mathbb{N}}
\renewcommand{\P}{\mathbb{P}}
\newcommand{\cU}{\mathcal{U}}
\def\phi{\varphi}
\def\R{{\mathbb R}}
\def\N{{\mathbb N}}
\def\Z{{\mathbb Z}}
\def\H{{\mathbb H}}
\def\P{{\mathcal P}}
\def\F{{\mathcal F}}
\def\M{{\mathcal M}}
\def\le{\leqslant}
\def\ge{\geqslant}
\def\F{\mathcal{F}}
\def\M{\mathcal{M}}
\begin{document}

\title{Phase transitions for geodesic flows and the geometric potential }

\author[A. Velozo]{Anibal Velozo}  \address{Princeton University, Princeton NJ 08544-1000, USA.}
\email{avelozo@math.princeton.edu}

\begin{abstract}
In this paper we study the phenomenon of phase transitions for the geodesic flow on some geometrically finite negatively curved manifolds. We define a class of potentials \emph{going slowly to zero through the cusps of} $M$ for which  the pressure map exhibits a phase transition. By a careful choice of the metric at the cusp we construct a geometrically finite manifold for which the geometric potential (or unstable Jacobian) exhibits a phase transition. Our results apply, in particular,  to the geodesic flow on an $M$-puncture sphere, for every $M\ge 3$, and a suitable choice of Riemannian metric.
\end{abstract}

\maketitle

\section{Introduction}
The study of phase transitions is a central topic in statistical mechanics (see \cite{ru2}, \cite{h}). As a result of  external conditions certain properties of our medium might change dramatically, and often discontinuously.
A classical example of phase transition is the passage  between the solid, liquid and gaseous phases of matter, due to a continuous increase of temperature. From a mathematical standpoint phase transitions are strongly related to the regularity, or more precisely, to the lack of regularity of the thermodynamic free energy as a function of  other thermodynamical variables. This is usually called the Ehrenfest  classification of phase transitions (for a historical discussion see \cite{jae}). Thermodynamic formalism, as a subject of dynamical systems, is a mathematical theory inspired by statistical mechanics. This theory has had a considerable impact on the study of the ergodic theory of hyperbolic differentiable dynamical systems (see \cite{si}, \cite{ru}, \cite{bob}). It is in this framework that we will investigate the existence of phase transitions for the geodesic flow on some negatively curved manifolds. 

Before explaining our results--and their motivation--let us introduce some notation. Let $(M,g)$ be a complete pinched negatively curved manifold, and denote by $(g_t)_{t\in \R}$ the geodesic flow on the unit tangent bundle $T^1M$. In this paper we will constantly refer to real valued functions with domain $T^1M$ as `potentials'; this is a standard convention in thermodynamic formalism. The topological pressure of a potential $F$ is the quantity $$P(F)=\sup_{\mu\in \M(g)} \{h_\mu(g)+\int Fd\mu\},$$
where the supremum runs over the space of invariant probability measures of the geodesic flow, which we denote by $\M(g)$ (for precise definitions see Section \ref{pre}). An invariant probability measure satisfying $P(F)=h_\mu(g)+\int Fd\mu$, is called an equilibrium state for the potential $F$. We refer to the map $t\mapsto P(tF)$ as the pressure map of the potential $F$. In this context the parameter $t$ is interpreted as the inverse of the temperature and the thermodynamic free energy is the quantity $P(tF)$ (for a complete discussion about this interpretation we refer the reader to \cite{ru}). Since for the geodesic flow on non-compact manifolds the regularity of the pressure map is not well understood (our best result on the regularity of the pressure is Theorem \ref{deriva}), we will use a more qualitative notion of phase transition (see Definition \ref{phdef}). 

We emphasize that the potentials for which we will describe phase transitions are H\"older continuous. We remark that in the compact case  H\"older potentials can not develop phase transitions.   By the work of Bowen \cite{b73} and Ratner \cite{rat}, we know that the geodesic flow on a compact negatively curved manifold can be modelled as a suspension flow over a shift of finite type.  As a consequence we obtain that the pressure map of a H\"older potential is real analytic, and that every H\"older potential has a unique equilibrium state (for a complete discussion we refer the reader to \cite{b73}, \cite{bob} and \cite{ru2}). In the context of symbolic dynamics the study of phase transitions has a long story. In this situation a phase transition is understood as a time $t$ where the pressure map has discontinuous first, or some higher derivative. For shifts of finite type and non-H\"older potentials phase transitions were constructed in \cite{ho} and \cite{lo}. For countable Markov shifts phase transitions have been extensively  studied by Sarig (see \cite{s1}, \cite{s2} and \cite{s3}). Based on the work of Sarig, phase transitions have also been studied for some interval maps (see for instance  \cite{pz}, \cite{bi}, \cite{it}, \cite{it2}). For suspension flows over countable Markov shifts phase transitions have been studied in \cite{ij}. It worth pointing out that   for suspension flows over countable Markov shifts it is possible to construct phase transitions even when the shift map satisfies the BIP property (see \cite[Theorem 4.1]{ij}). Sarig  proved that on a countable Markov shift satisfying the BIP property, the pressure map of a locally H\"older potential  is real analytic whenever finite (see \cite[Corollary 4]{s4}). This is an significant difference between the thermodynamic formalism of the suspension flow and the shift map. The phase transitions constructed in this paper exhibit a similar behaviour to those from \cite{ij}. Phase transitions for quadratic-like maps have been extensively studied in \cite{cr1}, \cite{cr2}, \cite{cr3} and \cite{cr4}. In this body of work Coronel and Rivera-Letelier also constructed quadratic-like maps having sensitive dependence of the geometric potential at zero and positive temperature. Sensitive dependence is an interesting phenomenon that has not been studied for the geodesic flow.

In this paper we will construct a family of geometrically finite manifolds (see Definition \ref{geofindef}) for which it is possible to construct potentials that exhibit phase transitions. 
The philosophy behind our construction of phase transitions is very simple: if a potential decays very slowly to zero through the cusps of our manifold, then it is likely to develop a phase transition. A similar principle was used in the construction of phase transitions for Pomeu-Manneville maps (see \cite{pm}, \cite{lo}, \cite{s2}). The class of potentials vanishing at infinity is denoted by $C_0(T^1M)$; those are potentials that tend to zero at infinity (see Definition \ref{C_0}). In this paper we will define the family of strongly positive recurrent potentials (SPR potentials for short, see Definition \ref{sprdef}). SPR potentials in $C_0(T^1M)$ admit an equilibrium state (see Theorem \ref{eeq2}), and we can compute the first derivative of the pressure map (assuming that our potential is H\"older continuous)  for a large class of directions (see Theorem \ref{deriva}). One of the main goals of this paper is to prove the following result. 

\begin{theorem}[=Theorem \ref{teo6}]\label{i1}  There exists a geometrically finite manifold $M$ for which it is possible to construct a non-negative H\"older potentials $F$ such the the following holds. There exists a constant $t_F\in [-1,0)$ for which 
\begin{enumerate}
\item The potential $tF$ is strongly positive recurrent for $t>t_F$. 
\item The potential $tF$ does not have an equilibrium state for $t<t_F$. The pressure map is constant in the interval $(-\infty,t_F)$.
\end{enumerate}
In other words, the pressure map of $F$ exhibits a phase transition at $t =t_F$. Moreover, the pressure map is differentiable in $(-\infty, t_{F})\cup (t_{F},\infty)$. With respect to the behaviour at $t=t_{F}$ we have two possibilities:
\begin{enumerate}
\item[(4)] If the potential $t_{F}F$ does not have an equilibrium state, then the pressure map is differentiable everywhere.
\item[(5)]  If $t_{F}F$ has an equilibrium state, then the pressure map is not differentiable at $t=t_{F}$. 
\end{enumerate}

\end{theorem}

The potential constructed in Theorem \ref{i1}  goes \emph{slowly to zero} through the cusp of $M$ (see Definition \ref{class}).  We remark that in Theorem \ref{i1} the  manifold $M$ can be chosen to be hyperbolic of any dimension greater or equal to two. This result should be compared with the phase transitions described in \cite[Theorem 5]{s2} for the renewal shift, and in \cite{ij} for suspension flows; we have a very similar description of the pressure map. We would like to emphasize that phase transitions for the geodesic flow have been previously constructed in \cite{ij} and  \cite{irv} by symbolic methods (for the modular surface and extended Schottky manifolds resp.). Our method is more geometric, and we do not use any symbolic model. An advantage of our approach is that  we can incorporate the geometric potential (after some normalization) to the family of potentials exhibiting phase transitions. The following result will be proven in Section \ref{cons}.

\begin{theorem}[=Theorem \ref{fin}]\label{i2} There exists an extended Schottky manifold for which the geodesic flow has a measure of maximal entropy and the  geometric potential exhibits a phase transition.  
\end{theorem}

The family of extended Schottky manifolds is defined in Section \ref{defesg}. Theorem \ref{i2} is achieved by a careful modification of the metric at the cusp of a hyperbolic manifold. The main difficulty of this construction is that we can not modify the geometric potential without modifying the entire Poincar\'e series that determines the pressure. Our construction is motivated by the work of Peign\'e in \cite{p}.

\begin{remark} In Theorem \ref{i1}  and Theorem \ref{i2} the manifold $M$ can be assumed to be homeomorphic to a $k$-punctured sphere, for any $k\ge 3$. In both cases exactly one puncture is a cusp. In Theorem \ref{i1} we can moreover assume that the metric on $M$ is hyperbolic. In Theorem \ref{i2} the metric is hyperbolic away from a neighborhood of the cusp. 
\end{remark}

\begin{ack}
The author would like to thanks to his advisor G. Tian for his constant support and encouragements. We would also like to thank F. Riquelme for  useful comments at an early stage of this work,  F. Lin for useful comments on how to improve the aesthetics of the  paper, Y. Pesin for telling us about his joint work \cite{pz}, and G. Iommi for telling us about the idea behind phase transitions constructed for Pomeu-Manneville maps. 
\end{ack}

\section{Preliminaries}\label{pre} In this paper $(M,g)$ will always be a complete Riemannian manifold. We define the unit tangent bundle of $M$ as $T^1M=\{v\in TM: ||v||_g=1\}$. Since $(M,g)$ is complete, the geodesic flow on $T^1M$ is well defined for all times; we denote it by $(g_t)_{t\in\R}$. From now on, whenever we say `measure', we really mean `non-negative Borel measure on $T^1M$'. 
The Riemannian metric $g$ makes $M$ into a metric space, the induced distance function (shortest path distance) is denoted by $d$.  Let $\pi :T^1M\to M$ be the canonical projection. We define a metric on $T^1M$--which we still denote by $d$--in the following way \begin{align}\label{distance} d(x,y)=\max_{t\in [0,1]} d(\pi g_t(x),\pi  g_t(y)),\end{align}
for every $x,y\in T^1M$. We emphasize that this is the metric used in all the statement about the geodesic flow (but other possible candidates of metrics are usually uniformly equivalent to $d$, at least in the pinched negatively curved case). 

\begin{definition} The space of invariant probability measures of the geodesic flow is denoted by $\M(g)$.  Given $\mu\in \M(g)$, we denote by $h_\mu(g)$ the measure theoretic entropy of the measure $\mu$ under the map $g_1$ (the time one map of the geodesic flow).
\end{definition}

The mass of a measure $\mu$ is the number $\mu(T^1M)$, and it is denoted by $|\mu|$. We say that a sequence of measures $(\mu_n)_n$ \emph{converges  vaguely} to $\mu$ if for every compactly supported continuous potential $G$ we have that $$\lim_{n\to\infty}\int G d\mu_n=\int G d\mu.$$
We emphasize that a sequence of invariant probability measures  can converges vaguely to a sub-probability measure, in other words, a sequence of probability measures can lose mass. An important fact about the vague topology is that every sequence of invariant probability measures has a subsequence that converges in the vague topology (this follows from the locally compactness of $T^1M$). In this paper we will use the notation $||F||_0$ to denote the supremum norm of a potential $F$, more precisely $$||F||_0=\sup_{x\in T^1M}|F(x)|.$$

\subsection{ Structure of negatively curved manifolds}\label{strucneg}
From now on we will assume that $(M,g)$ is a complete Riemannian manifold of negative sectional curvature. We will moreover assume that $K_g\in [-a,-b]$, for some $a,b>0$, where $K_g$ is the sectional curvature of $M$. We refer to a  manifold satisfying those properties as a \emph{pinched negatively curved manifold}. 

The universal cover of $M$ is denoted by $\widetilde{M}$. The space $\widetilde{M}$ has a natural compactification, the so-called Gromov compactification. As with any compactification, we will add a `boundary' to $\widetilde{M}$, this is the visual boundary $\partial_\infty\widetilde{M}$. The space $\widetilde{M}\cup\partial_\infty\widetilde{M}$ is called the Gromov compactification of $\widetilde{M}$ and it is homeomorphic to a closed ball. We remark that using the ball model of hyperbolic space we can identify this compactification with the round unit ball; this is the picture we will have in mind even if the curvature is not constant.

We denote by $Iso(\widetilde{M})$ the group of isometries on $\widetilde{M}$.  It is well known that every isometry of $\widetilde{M}$ extends to a homeomorphism of $\partial_\infty \widetilde{M}$ and that the fundamental group of $M$ acts (via Deck transformations) isometrically, freely and discontinuously on $\widetilde{M}$. We identify $\pi_1(M)$ with a subgroup $\Gamma<$ $Iso(\widetilde{M})$. In this context there is a complete classification of the elements of $Iso(\widetilde{M})$: an isometry can be elliptic, hyperbolic or parabolic. An \emph{elliptic isometry} fixes a point in $\widetilde{M}$; this type of isometry will not be presented in $\Gamma$ since $M$ is a manifold. A \emph{hyperbolic isometry} fixes (pointwise) exactly two points   at infinity.  The geodesic connecting the fixed points is called the \emph{axis} of the hyperbolic isometry. The axis of a hyperbolic isometry in $\Gamma$ will descend to a closed geodesic on $M$. A \emph{parabolic isometry} fixes exactly one point at infinity. Parabolic subgroups will be important when studying geometrically finite manifolds, they are defined as follows:

\begin{definition}A \emph{parabolic subgroup} of Iso$(\widetilde{M})$ is a group of isometries where each element is parabolic, and they all fix the same point at infinity. A \emph{maximal parabolic subgroup} of $\Gamma$ is a parabolic subgroup that is not strictly contained in another parabolic subgroup of $\Gamma$. 
\end{definition}

\begin{definition} The \emph{limit set} of $\Gamma$, which we denote by $L(\Gamma)$, is the set of accumulation points in $\partial_\infty\widetilde{M}$ of the orbit of a point $x\in \widetilde{M}$ under $\Gamma$. The \emph{conical limit set}, denoted by $L_c(\Gamma)$, is the set of points $\xi\in \partial_\infty\widetilde{M}$ such that there exists a sequence of translates of $x\in \widetilde{M}$ under $\Gamma$ which converges to $\xi$ while staying at bounded distance from a geodesic ray ending at $\xi$. 
\end{definition}

In this paper $\pi_1(M)=\Gamma$ will always be \emph{non-elementary}, in other words it is not generated by one hyperbolic element, nor a parabolic subgroup (see \cite{bow} for several characterizations of this property). In this case we can relate the non-wandering set of the geodesic flow  with the limit set of $\Gamma$. Let $\Omega\subset T^1M$ be the \emph{non-wandering set} of the geodesic flow. We start by recalling the Hopf's parametrization of $T^1\widetilde{M}$. The unit tangent bundle of $\widetilde{M}$ is identified with $$T^1\widetilde{M}=(\partial_{\infty}\widetilde{M}\times\partial_{\infty}\widetilde{M})\setminus \text{(Diagonal)} \times \R,$$ via Hopf's coordinates by sending each vector $\tilde{v}\in T^1\widetilde{M}$ to $(v_-,v_+,b_{v_+}(o,\pi(\tilde{v})))$. Here $v_-$ and $v_+$ are respectively the negative and positive ends at infinity of the oriented geodesic line determined by $\tilde{v}$ in $\widetilde{M}$, and $b_{v_+}(o,\pi(\tilde{v}))$ is the Busemann function defined for $x,y\in\widetilde{M}$ and  $\xi\in\partial_\infty\widetilde{M}$ as
$$b_\xi(x,y)=\lim_{t\to\infty} d(x,\xi_t)-d(y,\xi_t),$$
where $t\mapsto \xi_t$ is any geodesic ray ending at $\xi$. Under this identification, the geodesic flow acts by translation in the third coordinate. The non-wandering set of $T^1M$ corresponds to the projection of $(L(\Gamma)\times L(\Gamma)\setminus (Diagonal))\times \R$ to $T^1M$ (see \cite{ebe}).  Fix a reference point $x_0\in \widetilde{M}$. A set of the form 
\begin{align}\label{horo}B_\xi(r)=\{x\in \widetilde{M}: b_\xi(x_0,x)\ge r\},\end{align}
is called a (closed) \emph{horoball center at} $\xi$. We say that $\xi$ is the base point of the horoball $B_\xi(r)$. We now proceed to define the important class of geometrically finite manifolds. 
\begin{definition} A point $p\in \partial_\infty \widetilde{M}$ is a \emph{bounded parabolic fixed point} of $\Gamma$ if the maximal parabolic subgroup of $\Gamma$ fixing $p$  acts cocompactly in $L(\Gamma)\setminus \{p\}$.
\end{definition}

\begin{definition}[Geometrically finite manifolds]\label{geofindef} We say that $\Gamma<$ Iso$(\widetilde{M})$ is geometrically finite if every point in $L(\Gamma)$ is a conical point or bounded parabolic. We say that $M$ is geometrically finite if $\pi_1(M)$ is geometrically finite.
\end{definition}

\begin{remark}[Bowditch description of geometrically finite manifolds]\label{gff} It follows from \cite{bow} that the non-compact part of the non-wandering set of the geodesic flow (on a geometrically finite manifold) go through the cusps of $T^1M$ and that each cusp is standard, i.e. the quotient of a horoball by the action of the maximal parabolic subgroup of $\pi_1(M)$ fixing the base point of the horoball. Moreover, $\pi_1(M)$ has a finite number of non-conjugate maximal parabolic subgroups (finite number of cusps). 
\end{remark}

\subsection{Thermodynamic formalism}
We start with the definition of the topological pressure of a potential.
\begin{definition}[Topological pressure] Let $F:T^1M\to \R$ be a continuous potential. We define the \emph{topological pressure} of $F$ as 
$$P(F)=\sup_{\mu\in \M(g)}\{h_\mu(g)+\int Fd\mu\}.$$
\end{definition}
 An invariant probability measure satisfying $P(F)=h_\mu(g)+\int Fd\mu$, is called an \emph{equilibrium state} for the potential $F$. 
For compact dynamical systems there is a well known relation--the variational principle--between the topological pressure of a continuous potential and a weighted version of the topological entropy (see \cite{wa}). 

It turns out that the topological pressure of  a H\"older potential has a nice characterization in terms of some critical exponent. Moreover, with H\"older regularity there exists at most one equilibrium state. Before making precise those results we start with some notation. Given two points $x,y\in\widetilde{M}$, we  denote by $[x,y]$ the oriented geodesic segment  starting at $x$ and ending at $y$. For a function $G:T^1\widetilde{M}\to \R$, we  use the notation $\int_x^y G$ to represent the integral of $G$ over the  tangent vectors to the path $[x,y]$ (in direction from $x$ to $y$). Given a potential $F:T^1M\to \R$, we denote by $\widetilde{F}:T^1\widetilde{M}\to \R$ the function $\widetilde{F}=F\circ p$, where $p$ is the canonical projection $p:T^1\widetilde{M}\to T^1M$.  The following definition was introduced in \cite{pps} (a similar definition was used earlier in \cite{cou}).

\begin{definition}\label{cexdef} Let $F$ be a continuous potential and $\widetilde{F}$ its lift to $T^1\widetilde{M}$. Define the \emph{Poincar\'e series associated to $(\Gamma,F)$} based at $z\in \widetilde{M}$ as
$$P(s,F)=\sum_{\gamma\in\Gamma} \exp\bigg(\int_z^{\gamma z}(\widetilde{F}-s)\bigg).$$
 The \emph{critical exponent of $(\Gamma,F)$} is $$\delta^F_\Gamma=\inf\{\text{s } | \text{ }P(s,F)\text{ is finite}\}.$$We say that the pair $(\Gamma,F)$ is of \emph{convergence type} if $P(\delta_\Gamma^F,F)<\infty$, in other words, the Poincar\'e series converges at its critical exponent. Otherwise we say $(\Gamma, F)$ is of \emph{divergence type}. 
\end{definition}

We use the notation $\delta_\Gamma$ to denote the critical exponent of $(\Gamma,0)$, where $0$ is the zero potential. 
 If $F$ is a H\"older potential, then the critical exponent does not depend on the base point $z$. We also remark that if $F$ is bounded, then $\delta_\Gamma^F$ is finite.

A general procedure due to S. Patterson \cite{pat} and  D. Sullivan \cite{sul} associates to $\Gamma$ a family of conformal measures on $\partial_\infty \widetilde{M}$ of exponent $\delta_\Gamma$, the so-called \emph{Patterson-Sullivan conformal measures} of $\Gamma$. Similar to the construction of the Patterson-Sullivan conformal measures, we can associate to the pair $(\Gamma,F)$ a family of Patterson densities. We briefly recall this procedure (which generalizes the Patterson-Sullivan construction to include H\"older potentials).   A \emph{Patterson density} of dimension $\delta$ for $(\Gamma,F)$ is a family of finite Borel measures $(\sigma_x)_{x\in\widetilde{M}}$ on $\partial_\infty \widetilde{M}$, such that, for every $\gamma\in\Gamma$, for all $x,y\in\widetilde{M}$ and for every $\xi\in\partial_\infty\widetilde{M}$ we have
$$\gamma_\ast\sigma_x=\sigma_{\gamma x} \quad \mbox{and} \quad \frac{d\sigma_x}{d\sigma_y}(\xi)=e^{-C_{F-\delta,\xi}(x,y)},$$
where $C_{F,\xi}(x,y)$ is the \emph{Gibbs cocycle} defined as
$$C_{F,\xi}(x,y)=\lim_{t\to\infty}\int_y^{\xi_t}\widetilde{F}-\int_x^{\xi_t}\widetilde{F},$$
for any geodesic ray $t\mapsto\xi_t$ ending at $\xi$. The limit in the definition of the Gibbs cocycle always exists because the manifold has negative curvature and the potential is H\"older continuous. If $\delta^F_\Gamma<\infty$, then there exists at least one Patterson density of dimension $\delta^F_\Gamma$ for $(\Gamma,F)$, which support lies in the limit set $L(\Gamma)$ of $\Gamma$ (see \cite[Proposition 3.9]{pps}). If $(\Gamma,F)$ is of divergence type then there is an unique Patterson-Sullivan density of dimension $\delta^F_\Gamma$ (see \cite[Corollary 5.12]{pps}). For now on assume that $\delta_\Gamma^F<\infty$, and let $(\sigma_x)_{x\in\widetilde{M}}$ be a Patterson density of dimension $\delta_\Gamma^F$.  Denote by $(\sigma^{\iota}_x)_{x\in \widetilde{M}}$ the Patterson density of dimension $\delta^F_\Gamma$ for $(\Gamma,F\circ \iota)$, where $\iota$ is the flip isometry map $v\mapsto -v$ on $T^1\widetilde{M}$. Using the Hopf parametrisation $v\mapsto (v_-,v_+,t)$ with respect to a base point $o\in\widetilde{M}$, the measure
$$d\tilde{m}(v)=e^{C_{F\circ \iota-\delta^F_\Gamma,v_-}(x_0,\pi(v))+C_{F-\delta^F_\Gamma,v_+}(x_0,\pi(v))}d\sigma^\iota_o(v_-)d\sigma_o(v_+)dt,$$
is independent of $o\in\widetilde{M}$, $\Gamma$-invariant and invariant by the geodesic flow. This induces a measure $m$ on $T^1M$ called the \emph{Gibbs measure associated to the Patterson density} $(\sigma_x)_{x\in \widetilde{M}}$. If $(\Gamma,F)$ is of divergence type this construction is unique and we refer to the resulting measure as the \emph{Gibbs measure associated to }$F$. A fundamental property of $m_F$ is that, whenever finite (and after a normalization to make it a probability measure) it is the unique equilibrium state for the potential $F$. For $F=0$ this construction gives us the so-called \emph{Bowen-Margulis measure}, which we denote by $m_{BM}$. Finally we state one of the main results in \cite{pps}, which is a crucial input in this work. 
\begin{theorem}\label{pps} \cite[Theorem 2.3]{pps} Let $F$ be a bounded H\"older potential. Then 
$$P(F)=\delta_\Gamma^F.$$
Moreover, if there exists a finite Gibbs measure $m_F$ for $(\Gamma,F)$, then  $m_F/||m_F||$ is the unique equilibrium state of $F$. Otherwise there is not equilibrium state.
\end{theorem}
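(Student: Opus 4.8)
The plan is to establish the identity $P(F)=\delta_\Gamma^F$ first, and then to characterize the equilibrium states through the Gibbs property of $m_F$. For the inequality $P(F)\ge \delta_\Gamma^F$ I would use the Patterson density $(\sigma_x)$ of dimension $\delta_\Gamma^F$ for $(\Gamma,F)$ to build the Gibbs measure $m_F$ on $T^1X$ via the Hopf parametrization, and then establish a local \emph{Gibbs property}: there exist constants $c>1$ and $\epsilon_0>0$ so that for every $v$ in $\supp(m_F)$, every $T>0$ and every $\epsilon\in(0,\epsilon_0)$,
$$c^{-1}\le \frac{m_F\big(B(v,T,\epsilon)\big)}{\exp\left(\int_0^T \widetilde{F}(g_sv)\,ds-T\delta_\Gamma^F\right)}\le c,$$
where $B(v,T,\epsilon)$ is the Bowen $(T,\epsilon)$-ball. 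When $m_F$ is finite this property, together with the Brin--Katok local entropy formula applied to its normalization, yields $h_{m_F}(g)+\int F\,dm_F=\delta_\Gamma^F$, hence $P(F)\ge\delta_\Gamma^F$. When $m_F$ is infinite I would instead produce, for each $\eta>0$, a compactly supported $(g_t)$-invariant probability measure --- for instance obtained by equidistributing long closed geodesics that spend a definite proportion of their length inside a large compact core, whose exponential growth rate is $\delta_\Gamma^F$ by the orbit--counting interpretation of the Poincar\'e series $P(s,F)$ --- with $h_\mu(g)+\int F\,d\mu\ge \delta_\Gamma^F-\eta$.

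For the reverse inequality $P(F)\le\delta_\Gamma^F$ I would take any $\mu\in\M(g)$ with $\int F\,d\mu>-\infty$ and run a Katok-type argument: cover a set of almost full $\mu$-measure by a controlled number of Bowen balls and use the Gibbs property (now as an upper bound, valid near $\supp(m_F)$ and extended to a neighborhood of the nonwandering set $\Omega$ using the geometric finiteness assumption, which keeps cusp excursions quantitatively under control) to bound the number of $(T,\epsilon)$-separated orbits by $\exp(T(\delta_\Gamma^F+o(1)))$ after weighting each by $\exp(-\int_0^T\widetilde{F})$; Brin--Katok then gives $h_\mu(g)+\int F\,d\mu\le\delta_\Gamma^F$. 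The boundedness of $F$ and the uniform bounds on the curvature and its derivatives are exactly what make the distortion and shadowing estimates behind the Gibbs property uniform, so this is where the standing hypotheses are used most heavily.

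With $P(F)=\delta_\Gamma^F$ in hand, the equilibrium-state statement follows by identifying every equilibrium state with $m_F$. If $m_F$ is finite, the Gibbs property and the variational computation above already show $m_F/\|m_F\|$ is an equilibrium state; for uniqueness I would disintegrate an arbitrary equilibrium state $\mu$ along the strong unstable foliation, and show via the Gibbs property together with a Hopf-type argument (using ergodicity of $m_F$ on $\supp(m_F)$, which is itself a consequence of divergence type, or of a direct mixing argument) that the conditional measures are proportional to the leafwise measures induced by the Patterson density, forcing $\mu=m_F/\|m_F\|$. If $m_F$ is infinite, the same disintegration argument shows that an equilibrium state $\mu$ would be absolutely continuous with respect to $m_F$ with essentially constant density, hence a finite multiple of $m_F$ --- impossible; so no equilibrium measure exists.

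The hard part will be the upper bound $P(F)\le\delta_\Gamma^F$ together with the non-existence claim: both require transferring the Gibbs property from $\supp(m_F)\subset\Omega$ to a neighborhood on which Bowen balls can genuinely be counted, and bounding the amount of mass an invariant measure can put on deep cusp excursions. This is precisely where geometric finiteness enters and where a naive transcription of the compact (Bowen--Ruelle) argument breaks down.
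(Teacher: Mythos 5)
This statement is not proved in the paper at all: it is quoted verbatim (in simplified form) from Paulin--Pollicott--Schapira \cite{pps}, and the author explicitly defers to \cite[Section 6]{pps} for the proof. So there is no in-paper argument to compare against; what you have written is a sketch of the PPS strategy itself (Gibbs property on Bowen balls, Brin--Katok, a Katok-type covering for the upper bound, a Hopf argument for uniqueness), which is indeed the right general outline. But as a proof it has concrete gaps beyond being a sketch. First, your lower bound in the infinite-$m_F$ case rests on equidistribution and counting of closed geodesics ``whose exponential growth rate is $\delta_\Gamma^F$''; in \cite{pps} that counting result is a \emph{consequence} of the variational principle and mixing of the Gibbs measure, not an input, so as written this step is circular. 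The standard non-circular route is to exhaust $\Gamma$ by finitely generated subgroups $\Gamma_n$ with $\delta_{\Gamma_n}^F\to\delta_\Gamma^F$ and apply the finite-measure case to each. Second, the Gibbs property holds only for vectors whose orbit returns to a fixed compact piece of $\supp(m_F)$ at the relevant times; your proposal to ``extend it to a neighborhood of the nonwandering set using geometric finiteness'' is precisely the missing argument, and moreover geometric finiteness is not among the hypotheses of the theorem (PPS prove it for arbitrary discrete $\Gamma$ under the pinching and bounded-derivative assumptions), so an argument that genuinely needs it proves a weaker statement than the one quoted.

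Third, the non-existence claim when $m_F$ is infinite is handled too quickly. Your argument (``an equilibrium state would be absolutely continuous with respect to $m_F$ with essentially constant density, hence a finite multiple of $m_F$'') presupposes that $m_F$ is ergodic and conservative, which holds only when $(\Gamma,F)$ is of divergence type. When $(\Gamma,F)$ is of convergence type the Patterson density need not be unique, $m_F$ is totally dissipative, and the Hopf argument collapses; this case has to be treated through the Hopf--Tsuji--Sullivan--Roblin dichotomy (an equilibrium state would force divergence at the critical exponent and finiteness of $m_F$, a contradiction). Since the paper itself invokes exactly this dichotomy elsewhere (in the proof of Theorem \ref{teo6}) to rule out an equilibrium state for $-F_n$, this is not a pedantic point: it is the mechanism the quoted theorem encodes, and your sketch does not reach it.
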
 
We remark that this result was obtained by Otal and Peigne in the case $F=0$ (see \cite{op}). The proof of Theorem \ref{pps} follows very closely the proof of \cite[Theorem 1]{op}. If $F=0$ this is simply saying that if the Bowen-Margulis measure is finite, then its normalization $m_{BM}/||m_{BM}||$ is  the measure of maximal entropy of the geodesic flow. Otherwise there is not measure of maximal entropy. It worth mentioning that in the compact case this measure was constructed in two different ways by Bowen \cite{bo72} and Margulis \cite{marg}. Later on Bowen \cite{bo74} proved the uniqueness of the measure of maximal entropy (in the compact case), so both constructions coincide.

We will briefly discuss under what assumptions a H\"older potential admits an equilibrium state, in other words, when we can ensure that the Gibbs measure $m_F$ is finite. From now on we will assume that $M$ is geometrically finite. In this case we have a finite number of non-conjugate maximal parabolic subgroups (see Remark \ref{gff}). Each maximal parabolic $\P$ defines a critical exponent $\delta_\P^F$ (see Definition \ref{cexdef}). The following result was proved in \cite{dop} for the case $F=0$, and later extended by Coudene in \cite{cou} for his definition of critical exponent. For a proof of the version stated here we refer the reader to \cite[Theorem 2.12]{rv}.

\begin{theorem} \label{gapdiv} Let $M$ be a geometrically finite manifold and $F$ a H\"older potential. Assume that $(\P,F)$ is of divergence type for a parabolic subgroup $\P$ of $\pi_1(M)$.  Then $\delta_\P^F<\delta_\Gamma^F$.
\end{theorem}
The next two results were also proved in \cite{dop} for $F=0$, but later generalized to include H\"older potentials (see  \cite[Theorem 8.3]{pps} and  \cite[Corollary 8.6]{pps} resp.).

\begin{theorem}\label{crigeofin2} Assume that $\delta_\Gamma^F$ is finite, and that $\Gamma$ is geometrically finite with $(\Gamma, F)$
of divergence type. Then the Gibbs measure $m_F$ is finite if and only if for every maximal  parabolic subgroup $\P$ we have that the sum 
$$\sum_{p\in \P} d(x,px)\exp\bigg(\int_x^{px} \widetilde{F}-\delta_\Gamma^F\bigg),$$
converges.
\end{theorem}

\begin{theorem}\label{crigeofin} Let $M$ be a geometrically finite manifold and $F$ a bounded H\"older potential. If $\delta_\P^F<\delta_\Gamma^F$ for every parabolic subgroup $\P$ of $\pi_1(M)$, then the Gibbs measure of $F$ is finite.
\end{theorem}

We finish this section with a definition that we will constantly use in this paper.  If $G$ is a group, we will use the notation $G^*$ to denote $G\setminus\{id\}$. 
\begin{definition}[Groups in Schottky position] Let $F_1$ and $F_2$ be discrete, torsion free subgroups of $Iso(\widetilde{M})$. We say that  $F_1$ and $F_2$ are in \emph{Schottky position} if there exist disjoint closed subsets $U_{F_1}$ and $U_{F_2}$ of $\partial_\infty \widetilde{M}$ such that $F_1^*(\partial_\infty \widetilde{M}\setminus U_{F_1})\subset U_{F_1}$ and $F_2^*(\partial_\infty \widetilde{M}\setminus U_{F_2})\subset U_{F_2}$.
\end{definition}

\subsection{Extended Schottky groups}\label{defesg}
 Let $N_{1} , N_{2} $ be two non-negative integers such that $N_1+N_{2}\geq 2$ and $N_2\ge 1$. Consider $N_{1}$ hyperbolic isometries $h_{1},...,h_{N_{1}}$ and $N_{2}$ parabolic ones $p_{1},...,p_{N_{2}}$ satisfying the following conditions:

\begin{enumerate}

\item  For $1\leq i\leq N_{1}$ there exists a compact neighbourhood $C_{h_{i}}$ of the attracting point $\xi_{h_{i}}$ of $h_{i}$ and a compact neighbourhood $C_{h_{i}^{-1}}$ of the repelling point $\xi_{h_{i}^{-}}$ of $h_{i}$, such that
    $$h_{i}(\partial \widetilde{M}\setminus C_{h_{i}^{-1}})\subset C_{h_{i}}.$$

\item  For $1\leq i\leq N_{2}$ there exists a compact neighbourhood $C_{p_{i}}$ of the unique fixed point $\xi_{p_{i}}$ of $p_{i}$, such that
$$\forall n\in\Z^{\ast} \quad p_{i}^{n}(\partial\widetilde{M}\setminus C_{p_{i}})\subset C_{p_{i}}.$$

\item  The $2N_{1}+N_{2}$ neighbourhoods introduced in $(1)$ and $(2)$ are pairwise disjoint.

\item The elementary parabolic groups $\langle p_{i}\rangle$, for $1\leq i\leq N_{2}$, are of divergence type.

\end{enumerate}

The group $\Gamma=<h_{1},...,h_{N_{1}},p_{1},...,p_{N_{2}}>$ is a non-elementary free group which acts properly discontinuously and freely on $M$ (see \cite[Corollary II.2]{dalpei}). Such a group $\Gamma$ is called an \emph{extended Schottky group}. It is proven in \cite{dalpei} that it is a geometrically finite group. Note that if $N_2=0$,  then the group $\Gamma$ only contains hyperbolic elements; in this case $\Gamma$ is a classical Schottky group and its geometric and dynamical properties are well understood.

\subsection{The Geometric potential}
We  briefly recall the construction of the geometric potential of $T^1M$. Pick a reference point $x_0\in \widetilde{M}$. For every $\xi\in \partial_\infty \widetilde{M}$ and $s\in \R$ we have a horoball $B_\xi(s)$ (see equation (\ref{horo})). The family $\{\partial B_\xi(s)\}_{s\in\R}$ foliates $\widetilde{M}$ by codimension one  hypersurfaces.  Let $x\in\widetilde{M}$ be the base of the vector $v\in  T^1M$ and $s_0\in \R$ such that $x\in\partial B_\xi(s_0)$. Each vector $\{\nabla_y b_\xi(o,y)\}_{y\in \partial B_\xi(s_0)}$ points to $\xi$ and is perpendicular to $\partial B_\xi(s_0)$. This defines a submanifold of $T^1\widetilde{M}$ passing through $v$, the so-called \emph{strong stable submanifold} at $v$; it will be denoted by $W^{ss}(v)$. A similar construction defines the \emph{strong unstable submanifold} at $v$, which we denote by $W^{su}(v)$. The strong (un)stable foliation is the foliation whose leaves are the strong (un)stable manifolds. One can characterize the points lying in $W^{ss}(v)$ and $W^{su}(v)$ by the following conditions:
$$W^{ss}(v)=\{w\in T^1\widetilde{M}: \lim_{t\to \infty} d(g_t v,g_tw)=0\}.$$
$$W^{su}(v)=\{w\in T^1\widetilde{M}: \lim_{t\to -\infty} d(g_t v,g_tw)=0\}.$$
For every $\gamma\in Iso(\widetilde{M})$  we have that $W^{ss}(\gamma v)=\gamma W^{ss}(v)$, and $W^{su}(\gamma v)=\gamma W^{su}(v)$. This implies that both foliations descend to $T^1M$. Observe that  $W^{ss}(g_t(v))=g_t (W^{ss}(v) )$, and $W^{su}(g_t(v))=g_t (W^{su}(v) )$; the geodesic flow preserves both foliations. 

\begin{definition}[Geometric potential] We define the \emph{geometric potential} or \emph{unstable jacobian} by the formula
$$F^{su}(\xi)=-\left.\dfrac{d}{dt}\right|_{t=0} \left.\log \det dg_t\right|_{W^{su}}(\xi),$$
where the determinant of $dg_t$ is computed with respect to an orthonormal basis (using the Riemannian metric $g$) of the unstable subspace at $\xi$. 
\end{definition}

The study of the geometric potential for (transitive) Anosov diffeomorphisms and flows is a classical subject (see \cite{si}, \cite{bob},  \cite{br}). In those cases the equilibrium state of the geometric potential corresponds to the SRB measure.  We will need the following result.

\begin{theorem}\cite[Theorem 7.2]{pps} \label{PPS1}
Let $\widetilde{M}$ be a simply connected pinched negatively curved manifold. Moreover assume that the derivatives of the sectional curvature are uniformly bounded. Then the geometric potential $F^{su}$ is H\"older continuous.
\end{theorem}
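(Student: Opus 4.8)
The statement is \cite[Theorem 7.2]{pps}, so strictly one only needs to quote it; here is how I would prove it. The plan is to identify $F^{su}$ with minus the mean curvature of the strong unstable horospheres, and then to deduce its H\"older regularity from the Riccati equation together with the standard hyperbolicity (``bunching'') estimate for the unstable distribution. First I would make the defining formula explicit. Fix $v\in T^1\widetilde X$ with footpoint $x=\pi(v)$, and let $U^{su}(v)$ be the shape operator on $\dot\gamma_v(0)^{\perp}$ of the strong unstable submanifold $W^{su}(v)$ at $v$, equivalently the second fundamental form at $x$ of the horosphere through $x$ centred at $\gamma_v(-\infty)$ taken with outward normal $v$. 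Identifying $T_vW^{su}(v)$ with the space of unstable Jacobi fields along $\gamma_v$, which evolve under $dg_t$ by $J'(t)=U^{su}(g_tv)J(t)$, a direct computation with an orthonormal frame yields
$$\left.\frac{d}{dt}\right|_{t=0}\left.\log\det dg_t\right|_{W^{su}}(v)=\operatorname{tr}U^{su}(v),$$
so that $F^{su}(v)=-\operatorname{tr}U^{su}(v)$. Since taking traces preserves H\"older continuity, it is enough to show that $v\mapsto U^{su}(v)$ is H\"older.

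Next I would invoke the Riccati equation: along $\gamma_v$ the family $U(t):=U^{su}(g_tv)$ solves $U'+U^{2}+\mathcal R_v(t)=0$, where $\mathcal R_v(t)$ is the symmetric curvature operator $w\mapsto R\!\left(w,\dot\gamma_v(t)\right)\dot\gamma_v(t)$ on $\dot\gamma_v(t)^{\perp}$, and $U^{su}(v)$ is singled out as the value at $t=0$ of the limit, as $s\to\infty$, of the solutions with a pole at $t=-s$ (the shape operators at $x$ of the geodesic spheres of radius $s$ about $\gamma_v(-s)$), equivalently the maximal globally bounded solution. The pinching $-a^{2}\le K_g\le-b^{2}$ gives the uniform two-sided bound $b\,\mathrm{Id}\le U(t)\le a\,\mathrm{Id}$, so $F^{su}$ is bounded with $-(n-1)a\le F^{su}\le-(n-1)b$ for $n=\dim\widetilde X$; and the hypothesis that the derivatives of the sectional curvature are uniformly bounded makes $(v,t)\mapsto\mathcal R_v(t)$, read in a parallel frame along $\gamma_v$, Lipschitz in $v$ uniformly in $t$.

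The heart of the proof, and the step I expect to be the main obstacle, is the H\"older dependence of $U^{su}(v)$ on $v$. Given $v,w$ with $d(v,w)<\epsilon$, Rauch comparison shows that $\gamma_v$ and $\gamma_w$ stay $O\!\left(\epsilon e^{a|t|}\right)$-close, so for $|t|\le T:=\tfrac1{2a}\log(1/\epsilon)$ they are $O(\epsilon^{1/2})$-close; transporting $\dot\gamma_w(t)^{\perp}$ onto $\dot\gamma_v(t)^{\perp}$ along the short connecting geodesic lets us regard $U^{su}(g_tv)$ and the transported $U^{su}(g_tw)$ as endomorphisms of a single space, with difference $D(t)$. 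Up to the (controlled) transport error, $D$ satisfies $D'=-\left(U^{su}(g_tv)\,D+D\,U^{su}(g_tw)\right)-\left(\mathcal R_v(t)-\mathcal R_w(t)\right)$, which is uniformly dissipative with rate $2b$ as $t$ increases because both shape operators are $\ge b\,\mathrm{Id}$. Integrating from $t=-T$ to $t=0$ with $\|D(-T)\|\le a-b$ and $\|\mathcal R_v(t)-\mathcal R_w(t)\|=O(\epsilon^{1/2})$ on the window, a Gr\"onwall estimate gives $\|U^{su}(v)-U^{su}(w)\|\le(a-b)e^{-2bT}+O(\epsilon^{1/2})=O\!\left(\epsilon^{\alpha}\right)$ with $\alpha=\min\{b/a,\,1/2\}$; taking traces, $F^{su}$ is $\alpha$-H\"older. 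The genuinely delicate point is the bookkeeping of the two distinct geodesics and their normal frames that makes this ``difference of Riccati solutions'' argument rigorous --- this is exactly where both the pinching and the bound on $\nabla K_g$ are used, and it is why the resulting exponent is governed by the curvature ratio $a/b$.
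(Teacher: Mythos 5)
The paper offers no proof of this statement: Theorem \ref{PPS1} is quoted verbatim from \cite[Theorem 7.2]{pps}, so there is nothing internal to compare your argument against. Your sketch --- identifying $F^{su}$ with $-\operatorname{tr}U^{su}$ via the shape operator of unstable horospheres, and obtaining H\"older dependence of $U^{su}$ on $v$ from the dissipative Riccati equation over a backward time window of length $\sim\frac{1}{2a}\log(1/\epsilon)$, with the pinching giving the contraction rate $2b$ and the bound on $\nabla K_g$ giving Lipschitz control of the curvature term --- is precisely the standard route by which the cited result is established, and it is correct in outline (the exponent $\min\{b/a,1/2\}$ is not optimal but suffices for H\"older continuity).
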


\noindent
It is proven in \cite[Theorem 3.9.1]{Kl} that if $(M,g)$ satisfies the pinching condition $-a^2\le K_g\le -b^2<0$, then we have \begin{align}\label{bounduj}-(N-1)a\le F^{su}(v)\le -(N-1)b,\end{align}
where $N$ is the real dimension of $M$ and $v\in T^1M$ (here $K_g$ stands for the sectional curvature of $(M,g)$). We emphasize that inequality (\ref{bounduj}) follows from a local computation: it is also true that if the sectional curvature of an open set $Z\subset M$ lies in $[-a_1^2,-b_1^2]$, then for every $v\in T^1Z$ we have \begin{align*}-(N-1)a_1\le F^{su}(v)\le -(N-1)b_1.\end{align*}
Inequality (\ref{bounduj}) also implies that for every invariant probability measure $\mu$ and $t\ge 0$  we have
$$h_\mu(g) -t(N-1)a\le h_\mu(g)+t\int F^{su}(v)d\mu(v)\le h_\mu(g) -t(N-1)b.$$
By the definition of the topological pressure it follows that
\begin{equation*} h_{top}(g)-t(N-1)a\le P(tF^{su})\le h_{top}(g)-t(N-1)b.  \end{equation*}

The following remark is a consequence of the local nature of inequality (\ref{bounduj}), and  inspires  the computations done in Section \ref{cons}.
\begin{remark}\label{insp} Define $U= F^{su}+(N-1)$. Assume that $$-(1+\frac{1}{N-1}M)^2\le K_g \le -(1+\frac{1}{N-1}L)^2,$$ on an open set $W$ of $M$. Then $$-(N-1)(1+\frac{1}{N-1}M) \le F^{su}(v)\le -(N-1)(1+\frac{1}{N-1}L),$$ for every vector $v\in T^1W$. Equivalently $-M \le U\le - L$.
\end{remark} 

It worth mentioning that under the assumptions of Theorem \ref{PPS1} Ruelle's inequality holds (see \cite{r}). More precisely we have that $$h_\mu(g)\le -\int F^{su}d\mu, $$ 
for every $\mu\in \M(g)$. In other words $P(F^{su})\le 0$. If $M$ has finite volume, then the equilibrium state of the geometric potential is the Liouville measure (see \cite{r}).

\section{Pressure map and SPR potentials}

In this section we will collect some useful information concerning the pressure map of potentials that vanish at infinity. We start with the definition of such potentials. 

\begin{definition}\label{C_0} We say that a continuous potential $F$ \emph{vanishes at infinity} if for every $\epsilon>0$, there exists a compact set $K\subset T^1M$ such that $\sup_{x\in K^c} |F(x)|<\epsilon.$ The space of continuous potentials vanishing at infinity is denoted by $C_0(T^1M)$.
\end{definition}

From now on we will focus on the geometrically finite case. The following two definitions will be very important in this paper. 
\begin{definition}[Topological entropy at infinity] Let $M$ be a geometrically finite manifold. We define the \emph{topological entropy at infinity} of the geodesic flow as $$\delta_\infty=\sup_\P \delta_\P,$$
where the supremum runs over the parabolic subgroups of $\pi_1(M)$. 
\end{definition}
\begin{definition}[SPR potentials]\label{sprdef} A potential $F\in C_0(T^1M)$ is called \emph{strongly positive recurrent} (SPR for short) if $P(F)>\delta_\infty$.
\end{definition}

The following result was recently obtained in \cite{rv}. 

\begin{theorem}\label{rv} Let $M$ be a geometrically finite manifold.  Let $(\mu_n)_n$ be a sequence of ergodic invariant probability measures converging to $\mu$ in the vague topology, and $F\in C_0(T^1M)$. Then 
$$\limsup_{n\to\infty} (h_{\mu_n}(g)+\int Fd\mu_n) \leq \|\mu\|(h_{\mu/|\mu|}(g)+\int Fd\mu/|\mu|)+(1-\|\mu\|)\delta_\infty.$$
\end{theorem}
We remark that if $\mu$ is the zero measure, then the right hand side is understood as $\delta_\infty$. A simple consequence of Theorem \ref{rv} is the following result.

\begin{corollary} \label{cor}Assume the hypothesis of Theorem \ref{rv}. Let $F\in C_0(T^1M)$ be a SPR potential. Then $F$ admits an equilibrium state.
\end{corollary}
\begin{proof}
Let $(\mu_n)_n$ be a sequence of ergodic probability measures such that $$h_{\mu_n}(g)+\int Fd\mu_n>P(F)-\frac{1}{n}.$$ We will assume that $(\mu_n)_n$ converges in the vague topology (otherwise take a subsequence). Let $\mu$ be the vague limit of the sequence $(\mu_n)_n$. By Theorem \ref{rv} we have that 
\begin{align*} P(F)=\limsup_{n\to \infty} \bigg(h_{\mu_n}(g)+\int Fd\mu_n\bigg)&\le |\mu|(h_{\mu/|\mu|}(g)+\int Fd\mu/|\mu|)+(1-|\mu|)\delta_\infty\\
&\le |\mu|P(F)+(1-|\mu|)\delta_\infty.
\end{align*}
Assume for a second that $\mu$ is not a probability measure, then we get 
$$ |\mu|P(F)+(1-|\mu|)\delta_\infty< P(F),$$
which leads to a contradiction. We conclude that $\mu$ is a probability measure. Finally  we obtain $$P(F)\le h_\mu(g)+\int Fd\mu,$$
which implies that $\mu$ is an equilibrium state of $F$. 
\end{proof}

We remark that for H\"older potentials this result also follows from a combination of Theorem \ref{gapdiv}, Theorem \ref{crigeofin} and  Lemma \ref{lem}. A more refined statement, whose proof is identical to the one of Corollary \ref{cor}, is our next result. 

\begin{theorem}\label{eeq2} Let $M$ be a geometrically finite manifold and  $F\in C_0(T^1M)$. Let $(\mu_n)_n$ be a sequence of ergodic invariant probability measures such that $$\lim_{n\to\infty}\big(h_{\mu_n}(g)+\int Fd\mu_n\big)=P(F).$$ Then the following statements hold. 
\begin{enumerate}
\item\label{i31} If $F$ is SPR, then  $(\mu_n)_n$  converges in the weak-* topology to an equilibrium state of $F$. 
\item\label{i32} Suppose that $F$ does not admit any equilibrium state.  Then  $(\mu_n)_{n}$  converges vaguely to the zero measure. In this case we  have  $P(F)=\delta_\infty$. 
\item\label{i33}  Suppose that $F$ does admit an equilibrium state.  Then the accumulation points of  $(\mu_n)_{n}$  lies in the set $$\{t\mu:t\in [0,1]\text{ and }\mu\text{ is an equilibrium state of $F$}\}.$$
\end{enumerate}
\end{theorem}

We remark that if $F$ is H\"older continuous, then $F$ has at most one equilibrium state (see Theorem \ref{pps}). In particular if $F\in C_0(T^1M)$ is SPR and H\"older, then any sequence of ergodic probability measures satisfying $$\lim_{n\to\infty}\big(h_{\mu_n}(g)+\int Fd\mu_n\big)=P(F),$$
must converge to the unique equilibrium state. Another important consequence of the SPR property is the following result (compare with \cite[Proposition 5.8]{rv}). 
\begin{theorem}[First derivative of the pressure]\label{deriva} Let $M$ be a geometrically finite manifold and $F\in C_0(T^1M)$ a SPR H\"older potential. For every $G\in C_b(T^1M)$ the following holds
$$\frac{d}{dt}_{|t=0}P(F+tG)=\int Gd\mu_F,$$
where $\mu_F$ is the equilibrium state of $F$. 
\end{theorem} 
\begin{proof} Let $\mu_0$ be the equilibrium state of $F$ and for every $t\ne 0$ we choose an ergodic probability measure $\mu_t$ such that $$h_{\mu_t}(g)+\int (F+tG)d\mu_t\ge P(F+tG)-t^2.$$
 Observe that 
\begin{align*} P(F+tG)-P(F)&\le \big(h_{\mu_{t}}(g)+\int (F+tG)d\mu_{t}+t^2\big)-\big(h_{\mu_{t}}(g)+\int F d\mu_{t}\big)\\
&=t\int Gd\mu_{t}+t^2.
\end{align*}
Similarly 
\begin{align*} P(F+tG)-P(F)&\ge \big(h_{\mu_{0}}(g)+\int (F+tG)d\mu_{0}\big)-\big(h_{\mu_{0}}(g)+\int Fd\mu_{0}\big)\\
&=t\int Gd\mu_0.
\end{align*}
In particular for $t>0$ we get 
$$\int Gd\mu_0\le \frac{P(F+tG)-P(F)}{t}\le \int Gd\mu_t+t,$$
and the reversed inequality for $t<0$. We now claim that $(\mu_t)_t$ converges in the weak-* topology to $\mu_0$ as $t$ goes to zero. First observe that 
$$P(F+tG)-t^2\le h_{\mu_t}(g)+\int (F+tG)d\mu_t\le P(F+tG),$$
therefore $$\lim_{t\to 0}\bigg(h_{\mu_t}(g)+\int F d\mu_t\bigg)=\lim_{t\to 0}\bigg(h_{\mu_t}(g)+\int (F+tG)d\mu_t\bigg)=\lim_{t\to 0}P(F+tG)=P(F).$$
Since $F$ is H\"older we know that $\mu_0$ is the unique equilibrium state of $F$. We now use Theorem \ref{eeq2} to conclude that $(\mu_t)_t$ convergues in the weak-* topology to $\mu_0$ as $t$ goes to zero. As a consequence we obtain that $\lim_{t\to 0}\int Gd\mu_t=\int Gd\mu_0$. This together with the inequalities above give us that $$\frac{d}{dt}_{|t=0}P(F+tG)=\int Gd\mu_0.$$
\end{proof}

The following two simple facts will be constanly used in this paper. For completeness we provide their proofs (see \cite{rv}). 
\begin{lemma} \label{lem} Let $M$ be a geometrically finite manifold and $F\in C_0(T^1M)$ a H\"older continuous potential. Then for every maximal parabolic subgroup $\P$ of $\pi_1(M)$ we have $\delta^{F}_\P=\delta_\P$.
\end{lemma}
\begin{proof} Pick a reference point $x\in M$ that belongs to the region of the manifold where the cusp is standard, i.e. where it looks like the quotient of a horoball by the maximal parabolic subgroup $\P$. We will moreover assume that $x$ belongs to the region of the cusp where $|F|<\epsilon$.  Using the convexity of the horoballs we get that the Poincar\'e series of $\P$ based at $x$ can be bounded below by $\sum_{p\in\P}\exp{(-(s+\epsilon)d(x,px))}$, and above by $\sum_{p\in\P}\exp{(-(s-\epsilon)d(x,px))}$. This implies that $|\delta_\P^F-\delta_\P|<2\epsilon$. Since $\epsilon$ was arbitrary we conclude the lemma. We remark that since $F$ is H\"older continuous the behaviour of the Poincar\'e series is independent of the base point. 
\end{proof}

\begin{lemma}\label{lem22} Let $M$ be a geometrically finite manifold and $F\in C_0(T^1M)$ a H\"older continuous potential. Then $P(F)\ge \delta_\infty$.
\end{lemma}
\begin{proof} Lemma \ref{lem} implies that $\delta_\P=\delta_\P^F$, for every maximal parabolic subgroup $\P$ of $\pi_1(M)=\Gamma$. It follows from the definition of the Poincar\'e series and the inclusion $\P\subset \Gamma$ that $\delta_\P^F\le \delta_\Gamma^F$. Since $F$ is H\"older continuous we know that $P(F)=\delta_\Gamma^F$ (see Theorem \ref{pps}). Finally we conclude that $$\delta_\infty=\sup_\P\delta_\P=\sup_\P \delta_\P^F\le \delta_\Gamma^F=P(F).$$
\end{proof}

The following result was obtained in \cite[Theorem 5.7]{rv}.

\begin{theorem}[Pressure map for positive potentials in $C_0(T^1M)$]\label{description} Let $M$ be a geometrically finite manifold. Then the pressure map of a positive H\"older potential $F\in C_0(T^1M)$ verifies the following properties:
\begin{enumerate}
\item[(1)] for every $t \in \R$ we have that $P(t F) \geq \delta_\infty$
\item[(2)] the function $t\mapsto P(tF)$ has a horizontal asymptote at $-\infty$, that is
$$ \lim_{t \to -\infty} P(tF)= \delta_\infty.$$
\end{enumerate}
Moreover, if $t_F:= \sup \left\{ t \leq 0 : P(tF)=\delta_\infty \right\}$, then
\begin{enumerate}
\item[(3)] for every $t>t_F$ the potential $tF$ has an equilibrium state, and
\item[(4)] the pressure function $t\mapsto P(tF)$ is differentiable in $(t_F,\infty)$, and it verifies
\begin{equation*}
P(tF)=
\begin{cases}
\delta_\infty & \text{ if } t < t_F\\
\text{strictly increasing}  & \text{ if } t > t_F,
\end{cases}
\end{equation*}
\item[(5)] If $t<t_F$ then the potential $tF$ does not have an equilibrium state.
\end{enumerate}
\end{theorem}

In this paper we will deal with potentials that are not necessarily positive. If the potential $F$ is non-negative, then we have a similar  description of the pressure map. The only difference is that the limit $$\lim_{t\to-\infty}P(tF)=A,$$
does not need to be equal to $\delta_\infty$ (by Lemma \ref{lem22} we know that $A\ge \delta_\infty$). We finish this section with the definition of phase transition that we will use in this paper. 
\begin{definition}[Phase transition for the geodesic flow]\label{phdef} We say that a potential $F$ exhibits \emph{a phase transition at }$t_0$ if there exists $\epsilon>0$ such that $P(tF)$ has an equilibrium state for $t\in (t_0,t_0+\epsilon)$, but it does not have for $t\in (t_0-\epsilon,t_0)$ (or vice versa). A potential $F$ exhibits a phase transition if it exhibits a phase transition for some $t_0\in \R$.  \end{definition}

As mentioned in the introduction our best result concerning the regularity of the pressure map is Theorem \ref{deriva}, so we can not define phase transitions in terms of points where the pressure map is not real analytic (as in the symbolic case).

\section{Phase transitions}\label{pha}
In this section we will prove Theorem \ref{i1}. We start with a  modification of  \cite[Theorem C]{dop}. 

\begin{proposition} \label{exp} Suppose we have a parabolic subgroup $\P$ and a hyperbolic isometry $h$ such that $\P$  and $H=\langle h\rangle$ are in Schottky position. Denote by $\Gamma_k$  the group generated by $\P$ and $\langle h^k\rangle$, and define $M_k=\widetilde{M}/\Gamma_k$. Let $F:T^1M_1\to \R$ be a  bounded H\"older potential for which $(\P,F)$ is of convergence type and $\delta_\P^F>0$.  Suppose that $\int_\gamma F=0$, where $\gamma$ is the periodic orbit associated to $h$. Denote by $F_n$ the lift of $F$ to $T^1M_n$. Then there exists $N_0$ such that if $n\ge N_0$, then $(\Gamma_n,F_n)$ is of convergence type, and $\delta_\P^F=\delta_{\Gamma_n}^{F_n}$.
\end{proposition}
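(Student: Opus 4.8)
The plan is to estimate the Poincaré series $P(s,F_n)$ for $(\Gamma_n,F_n)$ by decomposing elements of $\Gamma_n = \P * \langle h^n\rangle$ according to their reduced word structure in the free product, and to compare the resulting sum to the Poincaré series of $(\P,F)$. The starting point is the standard normal form: every $\gamma \in \Gamma_n^*$ is uniquely an alternating product of nontrivial elements of $\P$ and powers $h^{nk}$. Using the Schottky position hypothesis (the attracting/repelling neighborhoods of $\P$ and of $h$ are disjoint), one gets an additive-up-to-bounded-error estimate for $d(x,\gamma x)$ along such a word: concatenating geodesic segments that pass through a bounded fundamental-domain region near $x$ incurs only a uniformly bounded defect per syllable, by the usual ping-pong/fellow-traveling argument in negative curvature. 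Simultaneously, since $F$ is bounded and H\"older and $\int_\gamma F = 0$, the integral $\int_x^{\gamma x}\widetilde{F}$ along the broken geodesic splits as a sum of the contributions of the $\P$-syllables (each close to $\int_x^{p x}\widetilde F$ for the corresponding $p\in\P$) plus the contributions of the $h^{nk}$-syllables, and the latter are $O(1)$ per syllable: the piece of orbit tracking $h^{nk}$ stays in a neighborhood of the axis $\gamma$ where $\int_\gamma F=0$ forces the integral of $\widetilde F$ over a full period to vanish, leaving only bounded end effects.

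Next I would package these estimates into an inequality of the form
$$P(s,F_n) \;\le\; \sum_{m\ge 0} \Big(C \cdot P_{\P}(s,F)\Big)^{?}\cdot\Big(C\sum_{k\ne 0} e^{-(s-\|F\|_\infty\cdot 0)\,d(x,h^{nk}x)}\Big)^{?},$$
organized as a geometric-type series over the number $m$ of syllables, where $P_{\P}(s,F) = \sum_{p\in\P^*}\exp(\int_x^{px}(\widetilde F - s))$ and the hyperbolic factor is controlled by $\sum_{k\ne 0} e^{-(s-\eps)d(x,h^{nk}x)}$ for $s$ near $\delta_\P^F>0$. The key quantitative input is that $d(x,h^{nk}x) \ge n|k|\,\ell(\gamma) - C$, so the hyperbolic factor is bounded by $C' e^{-c n}$ for a fixed $c>0$ once $s$ stays in a fixed neighborhood of $\delta_\P^F$. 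Choosing $N_0$ so that $C' e^{-cN_0}$ times the relevant constant is $<1$ makes the geometric series in $m$ converge, which shows: (i) $P(s,F_n) < \infty$ for all $s > \delta_\P^F$ and $n\ge N_0$, hence $\delta_{\Gamma_n}^{F_n} \le \delta_\P^F$; (ii) since $\P \le \Gamma_n$ gives the reverse inequality $\delta_{\Gamma_n}^{F_n} \ge \delta_\P^F$ trivially, equality holds; and (iii) evaluating the same bound at $s = \delta_\P^F$, where $P_{\P}(\delta_\P^F, F) < \infty$ by the convergence-type hypothesis on $(\P,F)$, shows $P(\delta_\P^F, F_n) < \infty$, i.e. $(\Gamma_n,F_n)$ is of convergence type.

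I also need to handle the base-point independence and the fact that the series $P(s,F_n)$ should be computed for $\Gamma_n$ acting on $\widetilde X$ (so the orbit points $\gamma x$ are genuine $\Gamma_n$-translates), which is automatic since $\Gamma_n < \Gamma$ and $F$ is H\"older. The main obstacle is the geometric estimate in the first paragraph: making precise, with uniform constants independent of $n$, both the quasi-additivity of $d(x,\gamma x)$ over reduced words and the near-additivity of $\int_x^{\gamma x}\widetilde F$, together with the claim that each hyperbolic syllable contributes only a bounded amount to $\int\widetilde F$ (this is where $\int_\gamma F = 0$ is used, via a shadowing/closing estimate comparing the orbit segment to full periods of $\gamma$). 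Everything else is bookkeeping with geometric series, but getting the constants to be genuinely $n$-independent — so that a single $N_0$ works — is the delicate point and is presumably where the argument of \cite{dop} is adapted.
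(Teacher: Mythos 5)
Your proposal is correct and follows essentially the same route as the paper: decompose $\Gamma_n=\P * \langle h^n\rangle$ into reduced words, use the Schottky/ping-pong geometry plus the Gibbs-cocycle estimate to get quasi-additivity of both $d(x,\gamma x)$ and $\int_x^{\gamma x}\widetilde F$ with $n$-independent constants, and sum a geometric series over the number of syllables, using $\delta_\P^F>0$ to make the hyperbolic factor small for large $n$. The only simplification the paper makes that you circumvent with a shadowing argument is to place the base point $x$ on the axis of $h$, so that $\int_x^{h^{kn}x}\widetilde F=0$ and $d(x,h^{kn}x)=|kn|\,\ell(\gamma)$ hold exactly rather than up to bounded error.
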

\noindent
The heart of the proof of \cite[Theorem C]{dop} is the following elementary fact about Hadamard manifolds. Given $D>0$, there exists $C=C(D)>0$ such that for every geodesic triangle with vertices $x,y,z$, and  angle at $z$ bigger than $D$, then $$d(x,y)\ge d(x,z)+d(z,y)-C.$$ We begin with an analogous inequality that the Gibbs cocycle $C_{F,\xi}(x,y)$ satisfies.

\begin{lemma}\cite[Lemma 3.4]{pps}\label{1} Let $F$ be a H\"older potential such that  $||F||_0<B$. Then for every $r>0$, for all  $x,y\in\widetilde{M}$ and $\xi\in \mathcal{O}_xB(y,r)$ we have 
$$|C_{F,\xi}(x,y)+\int_x^y\widetilde{F}|\le L(r,B),$$
for certain uniform constant $L(r,B)$. Here $ \mathcal{O}_xB(y,r)\subset \partial_\infty \widetilde{M}$ denotes the set of end points of geodesic rays emanating from $x$ that intersect $B(y,r)$.
\end{lemma}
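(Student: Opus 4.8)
\textbf{Proof proposal for Lemma \ref{1}.}

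The plan is to reduce the estimate to the curvature-comparison geometry of Hadamard manifolds, exactly as in the proof of \cite[Lemma 3.4]{pps}. Fix $r>0$, fix $x,y\in\widetilde X$, and fix $\xi\in\mathcal O_xB(y,r)$, so that the geodesic ray from $x$ to $\xi$ passes within distance $r$ of $y$. Let $t\mapsto\xi_t$ be that ray parametrized by arclength starting at $x$, and choose $t_0$ with $d(\xi_{t_0},y)\le r$. The first step is to unwind the definition of the Gibbs cocycle:
$$C_{F,\xi}(x,y)=\lim_{t\to\infty}\Bigl(\int_y^{\xi_t}\widetilde F-\int_x^{\xi_t}\widetilde F\Bigr),$$
so that
$$C_{F,\xi}(x,y)+\int_x^y\widetilde F=\lim_{t\to\infty}\Bigl(\int_y^{\xi_t}\widetilde F-\int_x^{\xi_t}\widetilde F+\int_x^y\widetilde F\Bigr).$$
The three integrals on the right are over the three sides of the geodesic triangle with vertices $x$, $y$, $\xi_t$, so one is comparing the integral of $\widetilde F$ along the path $[x,y]\cup[y,\xi_t]$ with the integral along $[x,\xi_t]$. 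I would split each of $[x,\xi_t]$ and $[y,\xi_t]$ at the point nearest $y$ (roughly $\xi_{t_0}$), so that for $t$ large both long subsegments are uniformly Hausdorff-close fellow travellers of one another, while the remaining pieces all lie in the ball $B(y,r+O(1))$ and hence have uniformly bounded length.

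The key geometric input is the standard thin-triangles / fellow-traveller property in pinched negative curvature: if two geodesic segments share an endpoint direction up to bounded error and land within bounded distance of each other, they stay within a uniform distance (depending only on the curvature bounds and on $r$) of one another, with the parametrizations matching up to a bounded shift. Since $\widetilde F$ is H\"older continuous and bounded by $M$, the $\widetilde F$-integrals over two such fellow-travelling segments differ by at most a constant depending only on $r$, $M$, the H\"older data of $F$, and the pinching constants $a,b$: on the region where the two rays are exponentially close, the H\"older modulus of $\widetilde F$ makes the integrand differences summable (a geometric series in the exponential contraction rate $b$), and on the bounded-length region near $y$ one uses only $\|\widetilde F\|_\infty<M$ times the (uniformly bounded) length. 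Collecting these estimates and letting $t\to\infty$ gives the bound $|C_{F,\xi}(x,y)+\int_x^y\widetilde F|\le L(r,M)$ with $L(r,M)$ uniform, as claimed; the limit exists by the same exponential-closeness argument, which is why the statement records it as an equality rather than a $\limsup$.

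The main obstacle is making the fellow-traveller comparison quantitative enough that the error is genuinely \emph{uniform} in $x$, $y$ and $\xi$ and depends only on $r$ and $M$ (and the fixed curvature bounds): one must control simultaneously (i) the Hausdorff distance between the two long geodesic subsegments in terms of $r$ via convexity of the distance function in nonpositive curvature, (ii) the rate of exponential convergence of the two rays toward $\xi$ using the upper pinching bound $-a^2\le K_g$ together with the lower bound $K_g\le -b^2$, and (iii) the discrepancy in arclength parametrization, which is where $d(\xi_{t_0},y)\le r$ enters. Once (i)--(iii) are in hand, the H\"older estimate on $\widetilde F$ and the bound $\|\widetilde F\|_\infty<M$ assemble mechanically; since this is precisely \cite[Lemma 3.4]{pps}, I would in fact simply cite it, and the sketch above is the argument it rests on.
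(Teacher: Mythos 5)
Your proposal is correct and matches the paper's treatment: the paper gives no proof of this lemma, it simply cites \cite[Lemma 3.4]{pps}, and your sketch (splitting the triangle at the closest point to $y$, using exponential convergence of geodesics plus H\"older continuity on the long fellow-travelling pieces and the bound $\|\widetilde F\|_\infty<M$ on the bounded piece near $y$) is exactly the argument underlying that reference. Your remark that the constant really also depends on the H\"older data of $F$ and the pinching bounds is accurate and harmless, since these are fixed throughout the paper.
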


In the inequality above we are using the Gibbs cocycle defined in Section \ref{pre}. In the proof of Proposition \ref{exp} we will use that $C_{F,\xi}(x,y)$ is an additive cocycle, more precisely, that $$C_{F,\xi}(x,y)+C_{F,\xi}(y,z)=C_{F,\xi}(x,z).$$ 

\begin{proof}[Proof of Proposition \ref{exp}]
Since $H$ and $\P$ are in Schottky position we can find $U_\P,U_H\subset \widetilde{M}\cup \partial_\infty \widetilde{M}$ so that 
\begin{enumerate}
\item\label{100} $\P^*(\partial_\infty \widetilde{M}\setminus U_{\P})\subset U_{\P}$.
\item\label{200}$H^*(\partial_\infty \widetilde{M}\setminus U_{H})\subset U_{H}$.
\item $U_{H}\cap U_{\P}=\emptyset$.
\end{enumerate}
Fix $x\in M$ over the axis of $h$ so that  $x\not\in U_{H}\cup U_{\P}$. As a consequence of the Ping Pong Lemma we have that $\Gamma$ is isomorphic to the free product $H*\P$.  By the comments above Lemma \ref{1} and the fact that $U_H\cap U_\P=\emptyset$, we know that there exists a positive constant $C$ such that for every $y\in U_{H}$ and $z\in U_{\P}$ we have
\begin{align} \label{eq:tri}
d(y,z)\ge d(x,y)+d(x,z)-C.
\end{align}
 Applying inequality  \eqref{eq:tri} and the inclusions described in (\ref{100}) and (\ref{200}) we obtain
\begin{equation*} \label{eq:tria}
d(x,p_1h^{kn_1}...p_j h^{kn_j}x)\ge \sum_i d(x,p_i x)+ \sum_i d(x,h^{kn_i}x)-2jC,
\end{equation*}
where $n_i\in \Z^*$, $k\ne 0$ and $p_i\in\P^*$. Let $B$ be a bound for $F$, i.e. $||F||_0<B$. Choosing $\xi$ outside $U_\P \cup U_H$ and $r$ big enough we can apply Lemma \ref{1}. Then
\begin{align*}\int_x^{h^nx}\widetilde{F}+\int_x^{p  x}\widetilde{F}= \int_{px}^{p h^nx}\widetilde{F}+\int_x^{p x}\widetilde{F}  &\ge -2L(r,B)-C_{F,\xi}(x,p h^nx)\\
&\ge -3L(r,B)+\int_x^{p h^nx}\widetilde{F}.
\end{align*}
This immediately generalize to  
\begin{align*}\int_x^{p_1h^{kn_1}...p_jh^{kn_j}x}\widetilde{F} \le (2j+1)L(r,B) +
\sum_i \int_x^{h^{kn_i}x}\widetilde{F}+\sum_i \int_x^{p_i  x}\widetilde{F} .
\end{align*}
Define $l=d(x,hx)$. By the choice of $x$ we have that $d(x,h^{N}x)=|N| l$, and since  $\int_x^{hx}\widetilde{F}=0$ we also have  $\int_x^{h^Nx}\widetilde{F}=0$. Finally
$$\sum_{n\in\Z^*}\exp\bigg(\int_x^{h^{kn}x} (\widetilde{F}-s)\bigg) =\sum_{n\in\Z^*}\exp(-s|n|lk) =2\dfrac{\exp(-slk)}{1-\exp(-slk)}.$$
For simplicity we will bound the expression 
$$\tilde{P}(s)=\sum_{j\ge 1}\sum_{p_i\in\P^*,m_i\in\Z^*}\exp(\int_x^{p_1h^{kn_1}...p_jh^{kn_j}x}(\widetilde{F}-s)).$$
A bound for $P(s)$ follows identically, but here we have more symmetry. Using the inequalities above we obtain
 \begin{equation}\label{12}
 \tilde{P}(s)\le \sum_{j\ge 1} \left(e^{2(C+L(r,B)+1)} \sum_{n\in\Z^*}\exp\bigg(\int_x^{h^{kn} x} (\widetilde{F}-s)\bigg)   \sum_{p\in\P^*}\exp\bigg(\int_x^{p x} (\widetilde{F}-s)\bigg)  \right)^j .
 \end{equation}

Taking $k$ big enough we can ensure the right hand side of (\ref{12}) to be convergent at $s=\delta_\P^F$. In particular $\delta_\P^F\ge \delta_{\Gamma_k}^{F_k}$, which immediately implies $\delta_{\Gamma_k}^{F_k}=\delta_\P^{F}$. Inequality (\ref{12}) also implies the convergence property of the pair $(\Gamma_k,F_k)$. 
\end{proof}

\begin{remark}\label{generalized} Let $\{h_1,...,h_l\}$ be a collection of hyperbolic isometries and denote by $H_i$ the group generated by $h_i$. Suppose that the subgroups $\{H_1,...,H_l,\P\}$ are pairwise in Schottky position (as in the definition of extended Schottky, but allowing $\P$ to have bigger rank). Moreover, assume that the integral of $F$ over the closed geodesic associated to each $h_i$ vanishes. Define $\Gamma_n$ as the group generated by $\P$ and the elements $\{h_1^n,...,h_l^n\}$ . The proof of Proposition \ref{exp} can be modified to conclude that  for big enough $k$ we have $$\delta_\P^F=\delta_{\Gamma_k}^{F_k},$$ and that $(\Gamma_k,F_k)$ is of convergence type. For simplicity we will state our results only for the case treated in Proposition \ref{exp}, but we emphasize that everything works identically under the hypothesis of this remark. 
\end{remark}

The next proposition is very important for us, it provides the family of potentials for which we will have  phase transitions. In Section \ref{cons} we will use ideas from the proof of Proposition \ref{pot} to modify the metric at the cusp of a hyperbolic manifold, in order to achieve phase transitions for the geometric potential. Recall that $(\P,-F)$ is of convergence type if the sum 
$$\sum_{p\in\P}\exp\bigg(-\int_x^{px}\widetilde{F}-\delta^{-F}_\P d(x,px)\bigg),$$
is finite. 

\begin{proposition} \label{pot}  Let $M$ be a geometrically finite manifold. There exists a H\"older potential $F_0$ satisfying the following properties:
\begin{enumerate}
\item $F_0\in C_0(T^1M)$,
\item $F_0$ is positive near the cusps,
\item $(\P,-F_0)$ is of convergence type for every maximal parabolic subgroup $\P$ of $\pi_1(M)$.
\end{enumerate}

\end{proposition}

\begin{proof}  We will  define $F_0$ in a neighborhood of each cusp, and then we will extend $F_0$ to the rest of the manifold in a H\"older continuous way (making sure that $F_0\in C_0(T^1M)$). Pick a maximal parabolic subgroup $\P$ of $\pi_1(M)$, and denote by $\xi\in \partial_\infty \widetilde{M}$ to its fixed point. There exists a neighborhood $\mathcal{U}$ of the cusp associated to $\P$ which is isometric to $B_\xi(q_0)/\P$, for big enough $q_0$. Recall taht $\pi:T^1M\to M$ is the canonical projection.  Pick  a reference point $x\in \partial \cU$. For $w\in\mathcal{U}$ we define $\widehat{d}(w)=q$, if $\pi(w)\in \partial B_\xi(q+q_0)/\P$. We say that the geodesic $\gamma:[a,b]\to T^1M$  has \emph{height} $H$ if $$\max_{t\in[a,b]}\widehat{d}(\gamma(t))=H.$$ For $l<L$, define  $$S(l,L)=\{p\in\P:l<d(x,px)\le L\}.$$
By the definition of critical exponent, for every $\epsilon>0$ there exists a real number $C(\epsilon)$ so that
$$\sum_{p\in S(C(\epsilon),\infty)} \exp(-(\delta_\P +\epsilon) d(x,px))<\epsilon^2.$$
We define a sequence $(A_n)_n$ inductively as follows: let $A_1=C(1)$, and $A_{n+1}=\max(A_n+1,C(1/n))$. By construction the sequence of real numbers $(A_n)_{n}$ is strictly increasing and satisfies 
$$\sum_{p\in S(A_n,\infty)} \exp(-(\delta_\P +1/n)d(x,px))<\dfrac{1}{n^2}.$$
We define $H_n$ as the maximum height of the geodesic segments $[x,px]$, where $p$ runs in $S(A_n,A_{n+1})$. With the heights $(H_n)_n$ we construct a sequence $(B_n)_{n}$ by declaring $B_1=H_1$, and inductively define $B_{n+1}=\max(B_n+1, H_{n+1})$. Define a function $f$ on $\mathcal{V}=\widehat{d}^{-1}([B_1,\infty))\subset \cU$, by the following expression
\begin{equation*}
f(x)=
\begin{cases}
-\widehat{d}(x)+1/n-B_n  & \text{ if } x\in \widehat{d}^{-1}([B_n,B_n+1/n-1/(n+1)]) \\
1/(n+1)  & \text{ if } x\in  \widehat{d}^{-1}([B_n+1/n-1/(n+1), B_{n+1}]).
\end{cases}
\end{equation*}

Let $F:T^1\mathcal{V}\to \R$, be the composition of the projection from $T^1\mathcal{V}$ to $\mathcal{V}$ and $f$. We do the same construction for each cusp in $M$. Using these functions at the cusps and  any H\"older continuous extension to the rest of the manifold (making sure that $F_0\in C_0(T^1M)$), we obtain our H\"older continuous potential $F_0$.  We will now check that $F_0$ satisfies the properties described in Proposition \ref{pot}.
It follows from the construction of $f$ that $F_0$ goes to zero through the cusps. By  Lemma \ref{lem} to get that $\delta_\P^{-F_0}=\delta_\P$. It only remains to check that $(\P,-F_0)$ is of convergence type for every maximal parabolic subgroup $\P$ of $\pi_1(M)$. Observe that if $p\in S(A_n,A_{n+1})$, then $[x,px]$ has height at most $H_n$, in particular at most $B_n$. Because of the way we defined the function $f$ we obtain that if $p\in S(A_n,A_{n+1})$ and $v\in T^1[x,px]$, then $F_0(v)\ge\frac{1}{n}$.  In other words, for $p\in S(A_n,A_{n+1})$ we have  $$\int_x^{px}F_0\ge \frac{d(x,px)}{n}.$$ Finally\\
$$\sum_{p\in S(A_1,\infty)} \exp\bigg(-\int_x^{px}F_0-\delta_\P d(x,px)\bigg)=\sum_{n=1}^\infty \sum_{p\in S(A_n,A_{n+1})} \exp\bigg(-\int_x^{px}F_0-\delta_\P d(x,px)\bigg)$$

\begin{align*}
&\le \sum_{n=1}^\infty \sum_{p\in S(A_n,A_{n+1})} \exp\bigg(-\frac{1}{n}d(x,px)-\delta_\P d(x,px)\bigg) \\
&\le \sum_{n=1}^\infty \sum_{p\in S(A_n,\infty)} \exp\bigg(-\bigg(\frac{1}{n}+\delta_\P\bigg)d(x,px)\bigg) \\
&\le \sum_{n=1}^\infty \frac{1}{n^2},
\end{align*}
which is finite.

\end{proof}

\begin{remark}\label{rem}
Since the numbers $(B_n)_n$ can be arbitrarity far apart, we interpret the decay of $F$ through the cusp associated to $\P$ as `very slow'.
\end{remark}

\begin{definition}\label{class} A potential $G$ belongs to the family $\F_s$ if the following conditions are satisfied.
\begin{enumerate}
\item $G\in C_0(T^1M)$,
\item $G$ is positive in a neighborhood of the cusps of $M$,
\item $(\P,-G)$ is of convergence type for every maximal parabolic subgroup $\P$ of $\pi_1(M)$.
\end{enumerate}
The elements in $\F_s$ are called potentials \emph{going slowly to zero} through the cusps of $M$. The class of non-negative potentials in $\F_s$ is denoted by $\F_s^+$.
\end{definition}

The family $\F_s$ is not empty because of Lemma \ref{lem}. We remark that the family $\F_s$ is quite big: if $F\in \F_s$ and $G$ is a H\"older potential in $C_0(T^1M)$ satisfying $G\ge F$ in a neighborhood of the cusps, then $G\in \F_s$. We now proceed to prove Theorem \ref{i1}. The statement of our next result is  more involved that Theorem \ref{i1}, but it has the advantage of being more precise. Our manifolds are geometrically finite with one cusp, in particular $\delta_\infty=\delta_\P$. As in Theorem \ref{description} we use the notation $$t_{F}=\sup\{t: P(tF)=\delta_\P\}.$$

\begin{theorem} \label{teo6} Let $\P$ be a divergence type parabolic subgroup and $h$ a hyperbolic isometry such that $\langle h\rangle$ and $\P$ are in Schottky position. Define $\Gamma_n$ as the group generated by $\P$ and $\langle h^n\rangle$. Let $M_k=\widetilde{M}/\Gamma_k$, and $F_1:T^1M_1 \to \R$  a potential in the class $\F_s^+$. Assume that $\int_\gamma F_1=0$, where $\gamma$ is the periodic orbit associated to $h$. Let $F_n$ be the lift of $F_1$ to $T^1M_n$, and $N_0$ be the constant provided by Proposition \ref{exp} for $F=-F_1$. Then for $n\ge N_0$  we have:
\begin{enumerate}
\item $t_{F_n}\in [-1,0)$.
\item The potential $tF_n$ has an equilibrium state for $t>t_{F_n}$.
\item The potential $tF_n$ does not have an  equilibrium state for $t<t_{F_n}$.
\end{enumerate}
In conclusion, the pressure map of $F_n$ exhibits a phase transition at $t=t_{F_n}$. Moreover, the pressure map is differentiable in $(-\infty, t_{F_n})\cup (t_{F_n},\infty)$. With respect to the behaviour at $t=t_{F_n}$ we have two possibilities:
\begin{enumerate}
\item[(4)] If the potential $t_{F_n}F_n$ does not have an equilibrium state, then the pressure map is differentiable everywhere.
\item[(5)]  If $t_{F_n}F_n$ has an equilibrium state, then the pressure map is not differentiable at $t=t_{F_n}$. 
\end{enumerate}

\end{theorem}

We remark that since $\P$ is of divergence type, then the geodesic flow on $M_n$ has a measure of maximal entropy (a consequence of Theorem \ref{gapdiv} and Theorem \ref{crigeofin}).

\begin{center}
\begin{tikzpicture}\label{fig2}[scale=1]

\draw[color=gray!50] [dashed](-2,1)--(2,1);

\draw[red] plot[smooth] coordinates {(-2,1) (0,2.5) (1,4)};

\draw [>= stealth, ->](-5,0)--(2,0) node[below=12pt,midway]{Figure 1: Phase transition for $ F_1\in\F_s^+$};
\draw (2,0) node[below]{$t$};

\draw [>= stealth, ->](0,-0.2)--(0,4);
\draw (0,4) node[left]{$P(tF_n)$};

\draw (0,2.5) node{$\bullet$} node[right]{$\delta_{\Gamma_n}$};

\draw (0,1) node{$\bullet$} node[above right]{$\delta_\P$};

\draw (-3,0) node{$\bullet$} node[below]{$-1$};

\draw [domain=-5:-2]plot(\x,1);

\draw (-2,1) node{$\bullet$} node[below]{$t_{F_n}$};

\end{tikzpicture}
\end{center}

\begin{proof} 
Since the covering map $M_k\to M_1$, is one to one in a neighborhood of the cusp associated to $\P$ we get that the potential $F_n$ belongs to $C_0(T^1M_n)$. This implies that $\delta_\P=\delta_{\P}^{tF_n}$ (see Lemma \ref{lem}). Since $F_n$ is non-negative, the pressure map $t\mapsto P(tF_n)$ is non-decreasing. It follows from Corollary \ref{cor} that for $t>t_{F_n}$  there exists an equilibrium state for $tF_n$, and from Theorem \ref{deriva} that the pressure map is differentiable in $(t_{F_n},\infty)$.   By assumption $\P$ is of divergence type, then  Theorem \ref{gapdiv} gives us   $\delta_\infty=\delta_\P<\delta_{\Gamma_n}=P(0)$. We conclude that $t_{F_n}<0$. Since $F_1\in \F_s^+$ we know that $(\P,-F_1)$ is of convergence type. Notice that each $F_n$ lifts to the same potential on $T^1\widetilde{M}$, in particular the Poincar\'e series of $(\P,-F_n)$ is independient of $n$. It follows that $(\P,-F_n)$ is of convergence type for every $n\ge 1$. By the choice of $N_0$ (see Proposition \ref{exp}) we know that $\delta_\P^{-F_n}=P(-F_n)$, for every $n\ge N_0$. We can conclude that $\delta_\infty=P(-F_n)$; this immediately implies that $t_{F_n}\ge -1$. Moreover, by Hopf-Tsuji-Sullivan-Roblin theorem there is not equilibrium state for the potential $-F_n$ (since $(\Gamma_n, -F_n)$ is of convergence type; see \cite[Theorem 1.8]{pps}). Since $P(tF)\ge \delta_\infty$ (see Lemma \ref{lem22}) we can conclude that the pressure map is constant in $(-\infty, t_F)$. We will now check part (3). Suppose there exists $t\in (-\infty,t_{F_n})$ such that $tF_n$ has an equilibrium state, say $\mu_t$. Recall that by definition of the family $\F_s^+$, the potential $F_n$ is non-negative and positive in a neighborhood of the cusp associated to $\P$. Since the support of $\mu_t$ contains some portion of the cusp we conclude that $\int F_nd\mu_t>0$. Observe that for every $t'>t$ we have  $$\delta_\infty=h_{\mu_t}(g)+t\int F_nd\mu_t<h_{\mu_t}(g)+t'\int F_nd\mu_t.$$ This implies that for $t'\in (t,t_{F_n})$ we have $P(t'F_n)>\delta_\infty$, which is a contradiction. We conclude that the potential $tF_n$ does not have an equilibrium state for $t<t_{F_n}$. This proves part (3). We now proceed to prove part (4) and part (5). First suppose that $t_nF_n$ does not have an  equilibrium state. Let $\mu_t$ be the equilibrium state of $tF_n$, for $t>t_{F_n}$. Observe that 
\begin{align*}\lim_{t\to t_{F_n}^+} \bigg(h_{\mu_t}(g)+t_{F_n}\int F_nd\mu_t\bigg)&=\lim_{t\to t_{F_n}^+} \bigg(h_{\mu_t}(g)+t\int F_nd\mu_t\bigg)\\
&=\lim_{t\to t_{F_n}^+}P(tF_n)=P(t_nF_n).\end{align*}
By Theorem \ref{eeq2} we conclude that  $(\mu_t)_t$ converges vaguely to the zero measure as $t$ goes to $t_{F_n}$. In particular $$\lim_{t\to t_{F_n}^+}\int F_nd\mu_t=0.$$
By Theorem \ref{deriva} we know that the derivative of the pressure map of $F_n$ at $t$, for $t>t_{F_n}$, is given by $\int F_nd\mu_t$. We conclude that the pressure map is differentiable at $t=t_{F_n}$, and that the derivative at that point is equal to zero. This proves part (4). Now assume that $t_nF_n$ has an equilibrium state, and denote it by $\mu_{t_{F_n}}$. As in the proof of part (4) we will denote by $\mu_t$ the equilibrium state of $tF_n$, for $t>t_{F_n}$. The same argument used in the proof of part (4) allows us to conclude that $\int F_nd\mu_{t_{F_n}}>0$.  The left hand side derivative of the pressure map at $t=t_{F_n}$ is clearly equal to zero. We will prove that the right hand side derivative of the pressure map at $t=t_n$ is equal to $\int F_nd\mu_{t_{F_n}}$. As observed in part (3) we know that $$\lim_{t\to t_{F_n}^+}\bigg(h_{\mu_t}(g)+t_{F_n}\int F_nd\mu_t\bigg)=P(t_nF_n).$$
Theorem \ref{eeq2} allows us to conclude that every vague limit point of $(\mu_t)_t$ as $t$ goes to $t_{F_n}$, must be of the form $\lambda \mu_{t_{F_n}}$ (for some $\lambda\in [0,1]$). We now claim that $(\mu_t)_t$ actually converges to $\lambda\mu_{t_{F_n}}$ (for a fixed $\lambda$). The convexity and differentiability of the pressure map in $(t_{F_n},\infty)$ implies that the limit $$\lim_{t\to t_{F_n}^+} \int F_nd\mu_t=A,$$ exists. If $\lambda\mu_{t_{F_n}}$ is a limit point of $(\mu_t)_t$ as $t$ goes to $t_{F_n}$, then we must have $A=\lambda\int F_n d\mu_{t_{F_n}}$. We conclude that there is at most one possible choice for $\lambda$, and that the sequence of measures is convergent. Observe that the right hand side derivative of the pressure at $t=t_{F_n}$ is equal to $A=\lambda\int F_n d\mu_{t_{F_n}}$. In particular, it is less than or equal to $\int F_n d\mu_{t_{F_n}}$. Define $$L(t)=h_{\mu_{t_{F_n}}}(g)+t\int F_nd\mu_{t_{F_n}}.$$ Observe that $P(tF)\ge L(t)$, and that $P(t_nF)=L(t_n)$. The convexity of the pressure map implies that the right hand side derivative at $t=t_{F_n}$ is at least $\int F_n d\mu_{t_{F_n}}$. We conclude that the right hand side derivative of the pressure at $t=t_{F_n}$ must be equal to $\int F_n d\mu_{t_{F_n}}$. We remark that since $A=\int F_n d\mu_{t_{F_n}}$, we necessarily have $\lambda=1$. In other words, the sequence $(\mu_t)_t$ converges to $\mu_{t_{F_n}}$ as $t$ goes to $t_{F_n}$. 

\end{proof}

\begin{remark} Suppose the potential $F$ satisfies the following additional property: $(\P,-F)$ is of convergence type and $(\P,tF)$ is of divergence type for $t>-1$. In this case we can ensure that for every $t> -1$ the potential $tF_n$ is SPR; this implies that  $t_{F_n}=-1$. Moreover, at $t=-1$ the potential $tF_n$ does not have an  equilibrium state. By part (4) of Theorem \ref{teo6} we get that the pressure map is differentiable everywhere. 
\end{remark}

\begin{remark} If the potential $t_nF_n$ has an equilibrium state, then the pressure map exhibits a first order phase transition (using Ehrenfest classification). In this case the first derivative of the pressure develops a singularity. If the potential $t_nF_n$ does not have an equilibrium state, then it is reasonable to expect that some higher order derivative of the pressure map should develop a singularity. 
 In the context of countable Markov shifts, Sarig  investigated the relation between critical exponents and abnormal fluctuations (see \cite{s3}). It is a very interesting project to try to prove analogous result to those in \cite{s3} for the geodesic flow on a pinched negatively curved manifold.  

\end{remark}

We will briefly discuss what happen if we only assume $F\in \F_s$, i.e. we allow $F$ to take negative values.  As before we will assume that $M$ has only one cusp. First suppose that for every $\mu\in \M(g)$ we have $\int F_nd\mu\ge 0$. Then Theorem \ref{deriva} and the convexity of the pressure map implies that $t\mapsto P(tF_n)$, has the same description as the one of a potential in $\F_s^+$. As explain in the paragraph after Theorem  \ref{description} we need to be a bit careful here, in general we can not immediately conclude that $\lim_{t\to -\infty}P(tF)=\delta_\infty$. In our case this issue does not apply: our construction ensures that for some $t$ we have $P(tF)=\delta_\infty$. If there exists   $\mu\in \M(g)$ such that $\int F_nd\mu< 0$, then the set $$J=\{t\in \R: P(tF_n)=\delta_\infty\},$$ is a compact interval. Observe that for $t\in \R\setminus J$ the potential $tF_n$ is strongly positive recurrent and therefore admits an equilibrium state. We claim that if $t\in int(J)$, then  $tF_n$ does not have an equilibrium state. Suppose that for some $t\in int(J)$ there exists an equilibrium state for $tF$, say $\mu_t$. If $\int F_nd\mu_t>0$, then for $t'>t$ we have 
$$\delta_\infty=h_{\mu_t}(g)+t\int F_n d\mu_t<h_{\mu_t}(g)+t'\int F_n d\mu_t.$$
In particular if $t'\in J$ and $t'>t$, then $P(t'F)>\delta_\infty$. Similarly, if $\int F_nd\mu_t<0$, then for $t'<t$ we have $$\delta_\infty=h_{\mu_t}(g)+t\int F_n d\mu_t<h_{\mu_t}(g)+t'\int F_n d\mu_t.$$
In particular if $t'\in J$ and $t'<t$, then $P(t'F)>\delta_\infty$. We conclude that $\int F_nd\mu_t=0$ (otherwise we contradict the definition of $J$). Since $\mu_t$ is an equilibrium state for $tF_n$ we have $$\delta_\infty=P(tF_n)=h_{\mu_t}(g)+t\int F_nd\mu_t=h_{\mu_t}(g).$$
Observe that $\int F_nd\mu_t=0$ and $h_{\mu_t}(g)=\delta_\infty$ implies that $\mu_t$ is an equilibrium state for $sF_n$, for every $s\in J$. Since by construction $-F_n$ does not have an equilibrium state, and $-1\in J$, we conclude that the measure $\mu_t$ can not exist. We remark that potentials with this description of the pressure map (see Figure 2) can be constructed in a similar fashion to those in Theorem \ref{teo6}.  For instance suppose that $F_1$ is negative in the complement of a neighborhood of the cusp and identically zero on $\gamma$ (the geodesic associated to the hyperbolic generator $h$). If we take a closed geodesic that wrap around the lift of $\gamma$ to $M_n$ a large number of times (the geodesic represented by $ph^{nk}$ for big $k$ works) we get an invariant measure with negative integral against $F_n$. As mentioned in the paragraph below Definition \ref{class}, if $F\in \F_s$ and $t>1$, then $tF\in \F_s$. In particular, given $M>1$, there exists $N_1=N_1(M,F)$ such that the following holds: for every $n\ge N_1$, the pairs $(\Gamma_n,-MF_n)$ and  $(\Gamma_n,-F_n)$ are of convergence type and have critical exponent $\delta_\infty=\delta_\P$. In particular (by the convexity of the pressure map) $J$ contains the interval $(-M,-1)$. A similar argument allows us to construct potentials such that $J$ contains any compact subset of $(-\infty,0)$.

\begin{center}
\begin{tikzpicture}\label{fig3}

\draw[color=gray!50] [dashed](-2,1)--(2,1);
\draw[color=gray!50] [dashed](-6,1)--(-4,1);

\draw[red] plot[smooth] coordinates {(-2,1) (0,2.5) (1,4)};

\draw[red] plot[smooth] coordinates {(-6,3) (-5,1.5) (-4,1)};

\draw [>= stealth, ->](-6,0)--(2,0) node[below=12pt,midway]{$\textmd{Figure 2: Phase transition for } F\in\F_s$};
\draw (2,0) node[below]{$t$};

\draw [>= stealth, ->](0,-0.2)--(0,4);
\draw (0,4) node[left]{$P(tF)$};

\draw (0,2.5) node{$\bullet$} node[right]{$\delta_{\Gamma}$};

\draw (0,1) node{$\bullet$} node[above right]{$\delta_\P$};

\draw (-3,1) node[above]{$J$};

\draw [domain=-4:-2]plot(\x,1);

\draw (-2,1) node{$\bullet$} node[below]{$t_F$};
\draw (-4,1) node{$\bullet$};

\end{tikzpicture}
\end{center}
 In light of this discussion we have the following definition.
\begin{definition}[Types of phase transition] A H\"older potential $F\in C_0(T^1M)$ exhibits a phase transition of \emph{type A} if the graph of the pressure map looks like Figure 1. A potential $F$ exhibits a phase transition of \emph{type B} if the graph of the pressure map looks like Figure 2.
\end{definition}

Type A and B phase transitions  represent basically all types of phase transitions for H\"older potentials in $C_0(T^1M)$.

\begin{remark}\label{rem5} The manifolds $(M_n)_n$ constructed in Theorem \ref{teo6} are all diffeomorphic to $M_1$. It is a well known fact that every parabolic subgroup of  $Iso(\H^N)$ is of divergence type: we can verify the hypothesis of Theorem \ref{teo6} if $(\widetilde{M},g)$ is isometric to $\H^N$. 
\end{remark}
A concrete situation where Theorem \ref{teo6} applies is given in the following corollary. 

\begin{corollary}\label{cor2} Let $M$ be a thrice-punctured sphere. We can endow $M$ with a complete hyperbolic metric for which it is possible to construct H\"older potentials exhibiting phase transitions (as described in Theorem \ref{teo6}). By Remark \ref{generalized} the same holds if $M$ is a $k$-punctured sphere and $k\ge 3$. 
\end{corollary}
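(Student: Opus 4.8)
The plan is to realize a thrice-punctured sphere as $\widetilde{X}/\Gamma$ for a suitable discrete subgroup $\Gamma$ of $Iso(\H^2)$ that fits the hypotheses of Theorem \ref{teo6}. First I would take $\Gamma$ to be an extended Schottky group in the sense of Section 2.2 with $N_1=1$ hyperbolic generator $h$ and $N_2=1$ parabolic generator $p$ (so $N_1+N_2=2\ge 2$ and $N_2=1\ge 1$); by the Ping-Pong/Dal'bo--Peign\'e picture, $\Gamma=\langle h\rangle * \langle p\rangle$ is free of rank $2$, geometrically finite, with a single cusp coming from the conjugacy class of $\langle p\rangle$. Since every parabolic subgroup of $Iso(\H^N)$ is of divergence type (Remark \ref{rem5}), the subgroup $\P=\langle p\rangle$ is automatically of divergence type, and by construction $\langle h\rangle$ and $\P$ are in Schottky position. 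Topologically $\H^2/\Gamma$ is a sphere with two punctures coming from the "hyperbolic" ends (funnels, or cusps depending on the chosen generators) plus one cusp, i.e. a thrice-punctured sphere; one must just check that the quotient of $\H^2$ by a free rank-$2$ group of this type is indeed homeomorphic to a thrice-punctured sphere, which is standard surface topology (Euler characteristic $-1$, free $\pi_1$ of rank $2$, one parabolic conjugacy class giving a cusp and two hyperbolic ends giving the remaining punctures).

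Next I would produce the potential. By the non-emptiness of the class $\F_s^+$ (this follows from Proposition \ref{pot}: the constructed $F_0$ is bounded, H\"older, positive near the cusp, goes to zero through the cusp, and has $(\P,-F_0)$ of convergence type, and one can arrange it to be nonnegative by adding a suitable nonnegative H\"older bump supported away from the cusp, or simply by taking the nonnegative truncation in the construction), there is a potential $F\in\F_s^+$ on $T^1X_1$. The one extra requirement of Theorem \ref{teo6} is $\int_\gamma F=0$ where $\gamma$ is the closed geodesic corresponding to $h$; since $\gamma$ is a fixed compact curve in the non-cuspidal part of $X_1$ and the construction of $F_0$ in Proposition \ref{pot} only constrains $F$ in a neighborhood of the cusp, I would simply choose the bounded H\"older extension to the rest of $X_1$ so that it vanishes identically on a neighborhood of $\gamma$ (while remaining nonnegative). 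Then all hypotheses of Theorem \ref{teo6} hold for this $(\P,h,F)$.

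Now I would invoke Theorem \ref{teo6}: there is $N_0$ such that for all $n\ge N_0$, setting $\Gamma_n=\langle \P, h^n\rangle$, $X_n=\H^2/\Gamma_n$, and $F_n$ the lift of $F$, the potential $tF_n$ on $T^1X_n$ exhibits a phase transition of type A at $t_{F_n}\in[-1,0)$, with the pressure map $t\mapsto P(tF_n)$ differentiable off $t_{F_n}$. The manifolds $X_n$ are all diffeomorphic to $X_1$ (Remark \ref{rem5}), hence still thrice-punctured spheres with exactly one cusp, and the metric remains hyperbolic since we only passed to a subgroup of $Iso(\H^2)$. For the type-B (resp. reflected) phase transitions described in the paragraph before the corollary, I would instead start from an $F\in\F_s$ that is negative on the complement of a cusp-neighborhood and zero on $\gamma$, and use the explicit "wrap $ph^{nk}$ around $\gamma$ many times" construction of an invariant probability measure with $\int F_n\,d\mu<0$, exactly as in that discussion, to get the compact interval $J$. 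Finally, for the $M$-punctured sphere with $M\ge 3$ I would use Remark \ref{generalized}: take an extended Schottky group with $N_1=M-2$ hyperbolic generators $h_1,\dots,h_{M-2}$ and one parabolic $p$, so that $\{\langle h_1\rangle,\dots,\langle h_{M-2}\rangle,\P\}$ are pairwise in Schottky position, choose $F$ vanishing on all the closed geodesics associated to the $h_i$, and apply the generalized version of Proposition \ref{exp} together with Theorem \ref{teo6}; the quotient is then an $M$-punctured sphere with a single cusp carrying a complete hyperbolic metric.

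The main obstacle is the bookkeeping in the second paragraph: one must simultaneously satisfy the local cusp requirements that force $F\in\F_s^+$ (so that Corollary \ref{cor} and Theorem \ref{gap1} apply to give the type-A picture) and the global normalization $\int_\gamma F=0$ needed to run the Schottky-cutting estimate of Proposition \ref{exp}; since these two constraints live in disjoint regions of $X_1$ (a neighborhood of the cusp versus a neighborhood of the fixed compact geodesic $\gamma$), there is no real conflict, but making the construction clean --- e.g. ensuring the extension stays nonnegative and H\"older across the gluing and that $\gamma$ can be taken disjoint from the standard cusp neighborhood --- is the only point requiring care. Everything else is a direct citation of Theorem \ref{teo6}, Remark \ref{generalized}, and Remark \ref{rem5}.
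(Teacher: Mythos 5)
Your proposal is correct and follows essentially the route the paper intends for this corollary: realize the thrice-punctured sphere as $\H^2/\Gamma$ with $\Gamma$ an extended Schottky group with one parabolic and one hyperbolic generator, use the fact that parabolic subgroups of $Iso(\H^2)$ are of divergence type (Remark \ref{rem5}), build $F\in\F_s^+$ via Proposition \ref{pot} with the extension chosen to vanish near $\gamma$, and apply Theorem \ref{teo6} (and Remark \ref{generalized} for $M\ge 3$). The paper treats this as immediate and gives no further argument, so your additional bookkeeping (topology of the quotient, compatibility of the cusp construction with the normalization $\int_\gamma F=0$) is a faithful filling-in of the same proof.
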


\section{Phase transitions for the geometric potential}\label{cons}
In this section we will modify the metric at the cusp of a hyperbolic manifold in such a way that the geometric potential exhibits a phase transition. Since $F^{su}$ is not a potential that goes to zero through the cusps of the manifold, to apply the techniques developed in previous sections we need to consider a normalization. From now on we will assume that our manifold has real dimension $N$. 
\begin{definition}\label{norm} We define the \emph{normalized unstable jacobian} as the function $$U=F^{su}+(N-1).$$
\end{definition}
Our goal is to prove that under certain conditions $U$ exhibits a phase transition, just as the potentials in Theorem \ref{teo6}.
The construction starts with the hyperbolic space $\H^{N}$. It is convenient to think in the half space model, i.e. the space $\R^{N-1}\times \R^+$ with coordinates $(x_1,...,x_{N-1},x_0)$ and metric $$ds^2=\dfrac{1}{x_0^2}(dx_1^2+...+dx_{N-1}^2+dx_0^2).$$ To simplify notation we denote $(x_1,...,x_{N-1},x_0)=({\bf x},x_0)$. In this model we have a preferent point at infinity, we denote this by $\xi_\infty\in \partial_\infty\widetilde{M}$. We will modify the hyperbolic metric in a neighborhood of $\xi_\infty$. It will be  convenient to consider the diffeomorphism $\H^N\to \R^{N}$ taking $({\bf x},x_0)$ to $({\bf x},\log(x_0))$. In this model the hyperbolic metric takes the form $g =e^{-2t}d{\bf x}^2+dt^2$. For a positive function $T:\R\to\R$, we define the Riemannian metric $$g_T=T(t)^2d{\bf x}^2+dt^2.$$ The sectional curvature of $g_T$ has value $-(T''(t)/T(t))$ for the planes generated by $\langle\partial/\partial x_i,\partial/\partial x_j\rangle$ and value $-(T''(t)/T(t))^2$ for those generated by $\langle\partial/\partial x_i, \partial/\partial t\rangle$. Define 
$$K(t)=-\frac{T''(t)}{T(t)}.$$
Bounds on $K(t)$ clearly imply bounds on the curvature of $g_T$. The lines $t\mapsto ({\bf x},t)$ are still geodesics and any isometry of $\R^{N-1}$ acts isometrically on $(\H^{N},g_T)$, where the action is given by $A.({\bf x},t)=(A({\bf x}),t)$. Observe that translations act transitively in  $H_t=\{({\bf x},t): {\bf x}\in \R^{N-1}\}$. This two basic observations and the definition of the Busemann function are enough to conclude that $H_t$ are the horospheres associated to $\xi_\infty$. For a function constant on horospheres we  will use the notation $F(t)=F({\bf x},t)$. In this section the \emph{height of a segment} will be the maximum value of the $t$ coordinate over the segment.
Suppose we have  a surjective, strictly increasing function $u:(0,\infty)\to\R$. We can define $T=T(u)$ by the equation $$T(u(t))=1/t.$$ In this context we will use  $o=({\bf 0},u(1))$ as reference point. Let $p$ be a translation in $\R^N$ such that $d(o,po)=1$. It is proven in \cite[Section 3]{dop} that there exists a uniform constant $C$ such that $$|d_T(o,p^n o)-2u(|n|)|\le C.$$ It will be important for us the fact that $C$ only depends on the pinching of the  sectional curvature of $(\H^N,g_T)$. Using the symmetry of our metric we can also conclude that $$|t_n-u(|n|))|\le D,$$ where $t_n$ is the maximum height of the geodesic segment $[o,p^n o]$. Similarly to what happened with $C$, the constant $D$ only depends on the pinching of the metric. From now on $D$ and $C$ will be the constants associated to a metric with pinching $-(1/3)^2\ge K_g\ge -2^2$.
Observe that 
\begin{align*}
K(u(t))&=-\dfrac{2tu'(t)+t^2u''(t)}{(u't)^3},\\
&=-\dfrac{1+2t\phi'(t)+t^2\phi''(t)}{(1+t\phi'(t))^3},\\
&=-\dfrac{g_1(t)+tg_1'(t)}{g_1(t)^3},\\
&=-(g_2(t)-\frac{t}{2}g_2'(t)),
\end{align*}
where we have made the substitutions \begin{align}\label{kk}u(t)=\log(t)+\phi(t); g_1(t)=1+t\phi'(t) \text{ and }g_2(t)=\frac{1}{g_1(t)^2}.\end{align}

\subsection{Construction of a special metric at the cusp}\label{cons1}
We will start by constructing a function $g_2$ satisfying several properties. Using (\ref{kk}), this will give us a function $u$ and therefore a metric on $\R^N$.

 Define $a_{n+1}=(1+\frac{1}{(N-1)n})^2$, $b_n=1-(2a_n-1)^{-\frac{1}{2}}$ and $c_n=(a_{n-1}-a_n)$. For $n\ge 2$ we choose $k(n)\in\N$ such that $$\exp \bigg(\left(\frac{1}{n}+\frac{1}{2}\right)D\bigg)\sum_{|k|\ge k(n)}\exp \bigg(-\left(\frac{1}{2}+\frac{1}{n}\right)2\log(|k|)\bigg)\le \frac{1}{n^2}.$$
Without loss of generality we can assume that $(k(n))_{n}$ is strictly increasing. We will define a sequence $(p_n)_n$ so that the conditions below are satisfied. We do this by induction, i.e.  the choice of $p_1,...,p_n$  will determine $p_{n+1}$. For $n\ge 2$ define $\Delta_{n}=p_n-p_{n-1}$.

\begin{enumerate}
\item\label{1o} The sequence $\left(\dfrac{c_n}{\Delta_{n}}\right)$ is a strictly decreasing.
\item \label{2o}$b_n\log(p_{n+1})\ge -b_n\log (p_n)+\sum_{i=2}^{n-1} b_i\log (p_{i+1}/p_i)$, for $n\ge 3$.
\item \label{3o}$(1-2b_n)\log(p_{n+1})\ge \log(k(n+1))$, whenever $1-2b_n>0$.
\item \label{4o}$\lim_{n\to 0}\dfrac{p_n c_n}{\Delta_{n}}=0$. 
\end{enumerate}

For $n\ge 2$ define the line connecting the points $(p_n,a_n)$ and $(p_{n+1},a_{n+1})$ as $J_n$. We could have assumed that $p_2$ is big enough compared to $p_1$ so that $J_2(0)\le 2$, we will do assume that. Define the intervals $I_n=[p_{n-1},p_n]$.
 We will construct a $\mathcal{C}^\infty$ function $g_2:\R^+\to\R$ satisfying the following properties:
\begin{enumerate}
\item\label{11}  For $t\in I_n$ and $n\ge 3$ we have that $2a_{n-1}-1\ge g_2(t)\ge a_n$.
\item\label{22} $g_2''(t)\ge 0\ge g_2'(t)$, for $t\ge 2$.
\item\label{33} $g_2(t)-\frac{t}{2}g_2'(t)\in [1/3,2]$, for all $t\in \R^+$.
\item\label{44} For $t\in I_n$ and $n\ge 3$ we have $g_2(t)-\frac{t}{2}g_2'(t)\ge a_n$.
\item\label{55} $g_2(t)=1$, for $t<1$.
\end{enumerate}
Observe that $(g_2-(t/2)g_2')'=g_2'/2-(t/2)g_2''$, so the condition $g_2''\ge 0\ge g_2'$ implies that  $g_2(t)-(t/2)g_2'(t)$ is non-increasing. Notice that if $0\ge g_2'$, then $g_2(t)-\frac{1}{2}tg_2'(t)\ge g_2(t)$. In particular, if $g_2\ge a_n$, then the same holds for $g_2(t)-\frac{1}{2}tg_2'(t)$. We now explain how to construct $g_2$. First define $J_0:\R^+\to\R$ as $J_0(t)=\sup_{n\ge 2}J_n(t)$. Now define
$$J(t) = \left\{
	\begin{array}{ll}
		1, \text{ if } t\in (0,1]\\
		\min \{J_0(t),t\}, \text{ if }t\ge 1
	\end{array}\right.$$
The function $J$ is not smooth, but we can smooth out $J$ in a neighborhood of the nodes to obtain a smooth function $J^*$ (as close to $J$ in the  $\mathcal{C}^\infty$ topology as needed). When $t\ge 2$ we can assume that $J^*\ge J$ and that $J^*$ is convex decreasing. The fact that $J^*$ can be taken convex on that region comes from the condition (\ref{1o}) in the definition of $(p_n)_n$. We remark that the choice of $(2a_{n-1}-1)$ as the upper bound of $g_2$ is done just to get room for this perturbation (notice $2a_{n-1}-1>a_{n-1}$). We finally set $g_2=J^*$. Define a function $\phi$ (up to additive constant) by the equation $$\phi'(t)=(g_2^{-1/2}-1)/t.$$ 
First observe that  $g_2\ge 1$, implies that $0\ge \phi'$. For $n\ge 3$ and $t\in I_n$ we have $2a_{n-1}-1\ge g_2(t)$, therefore $\phi'(t)\ge -b_{n-1}/t$. Since $(b_n)_n$ is a positive decreasing sequence we actually have that $\phi'(t)\ge -b_{n-1}/t$, for every $t\ge p_{n-1}$. Then for $n\ge 3$, and $t\ge p_{n+1}$ we get
\begin{align*}
\phi(t)-\phi(p_2)&=\sum_{i=2}^{n-1} \int_{p_i}^{p_{i+1}} \phi'(s)ds+\int_{p_{n}}^t \phi'(s)ds \\
& \ge \sum_{i=2}^{n-1} \int_{p_i}^{p_{i+1}} -\dfrac{b_i}{s} ds+\int_{p_{n}}^t -\dfrac{b_n}{t}ds\\
& = -\sum_{i=2}^{n-1} b_i\log(p_{i+1}/p_i)-b_n\log(t)+b_n\log(p_n)\\
& \ge -b_n\log(p_{n+1})-b_n\log(t)\\
& \ge -2b_n\log(t).\\
\end{align*}

We will normalize $\phi $ so that $\phi(p_2)=0$. Observe that by making the $p_i$'s even bigger we obtain that $\lim_{t\to\infty}\phi(t)=-\infty$. We will make that assumption. Finally define $u:\R^+\to \R$, by the equation $$u(t)=\log(t)+\phi(t).$$ 
By definition of $\phi'$ we have that $ u'=\frac{1}{t}+\phi'(t)=\frac{g_2^{-1/2}}{t}$, therefore $u$ is surjective and strictly increasing (recall $g_2\le 2$). As commented at the beginning of this section,  a function with the properties of $u$ determine a function $T=T(u)$ and therefore a metric $g_T$. We now pick the reference point  $o=({\bf 0},u(1))$ and a parabolic isometry such that $d(o,p o)=1$. We will now state a number of observations which will lead to the proof of Theorem \ref{i2}. \\
\noindent\\
{\bf Observation 1:}  The formula $K(u(t))=-(g_2(t)-(t/2)g_2'(t))$, and property (\ref{33}) in the definition of $g_2$ implies that $K\in [-2,-\frac{1}{3}]$ In particular the curvature of the metric $g_T$ satisfies the pinching $$ K_{g_T}\in [-4,-\frac{1}{9}].$$ 

\noindent
{\bf Observation 2:}  Hypothesis (\ref{44}) in the definition of the sequence $(p_n)_n$, and the calculation of $J^*(t)-\frac{1}{2}t(J ^*)'(t)$ on the intervals $I_n$ gives us that $$\lim_{t\to\infty} K(t)=-1.$$ 
For big enough $t$, $K(t)$ increases to $-1$. Remark \ref{insp} implies that the function $U$ goes to zero as $t$ goes to infinity. Moreover, it implies that $U$ is negative in a neighborhood of $\xi_\infty$. \\\\
\noindent
{\bf Observation 3:}   By property (\ref{44}) in the definition of $g_2$ we know that for $t\le p_{n+1},$ we have $$K(u(t))\le -a_{n+1}.$$ Combining this and  Remark \ref{insp} we get that for every $t<u(p_{n+1}),$ we have that  $ U(t)\le -\frac{1}{n}$. \\\\
\noindent
{\bf Observation 4:} Using hypothesis (\ref{3o}) in the definition of the sequence $(p_n)_n$ and the lower bound for $\phi(t)$ we get 
 \begin{align*}\log(k(n+1))<&(1-2b_n)\log(p_{n+1})\\<&\log(p_{n+1})+\phi(p_{n+1})=u(p_{n+1}).\end{align*}

\begin{lemma}\label{lemma3} There exists $m\in\N$ such that for every $n\ge k(m)$ and $|k|\le k(n+1)$, the function $U$ is at most $-\frac{1}{n}$  on the geodesic segment $[o,p^k o]$.
\end{lemma}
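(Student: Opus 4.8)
The goal is to find a uniform threshold $m$ so that, for all $n \ge k(m)$ and all $|k| \le k(n+1)$, the geodesic segment $[o, p^k o]$ stays in the region where $U \le -\tfrac{1}{n}$. By Observation 3 we already know that $U(t) \le -\tfrac{1}{n}$ whenever $t < u(p_{n+1})$, where $t$ denotes the height coordinate. So the entire task reduces to a \emph{height estimate}: I must show that the maximum height $t_k$ of the geodesic segment $[o, p^k o]$ satisfies $t_k < u(p_{n+1})$ for every $|k| \le k(n+1)$, once $n$ is large enough.

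The plan is to invoke the estimate $|t_k - u(|k|)| \le D$ recorded earlier in this section (here $u$ is being used both as the abscissa function and — through $T=T(u)$ — as the quantity governing how high the parabolic orbit rises; this is the content of the displayed inequality $|t_n - u(|n|)| \le D$ with $D$ depending only on the pinching, which by Observation 1 is fixed). Thus $t_k \le u(|k|) + D \le u(k(n+1)) + D$, using that $u$ is increasing and $|k| \le k(n+1)$. Now I want $u(k(n+1)) + D < u(p_{n+1})$. This is exactly where Observation 4 enters: it gives $\log(k(n+1)) < u(p_{n+1})$, and more usefully, since $u(t) = \log t + \phi(t)$ with $\phi$ bounded above (indeed $\phi \le 0$, as $g_2 \ge 1$ forces $\phi' \le 0$ and $\phi(p_2)=0$), we get $u(k(n+1)) \le \log k(n+1)$. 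Combining, $u(k(n+1)) + D \le \log k(n+1) + D$, and I need this to be strictly less than $u(p_{n+1})$. By Observation 4, $u(p_{n+1}) > (1 - 2b_n)\log(p_{n+1}) > \log k(n+1)$, but I need to absorb the additive constant $D$; this follows because $u(p_{n+1}) - \log k(n+1) \to \infty$. Concretely, $u(p_{n+1}) \ge \log p_{n+1} + \phi(p_{n+1}) \ge \log p_{n+1} - 2b_n \log p_{n+1} = (1-2b_n)\log p_{n+1}$, and since $b_n \to 0$ while $p_{n+1}$ can be taken arbitrarily large (and hypothesis (\ref{3}) forces $(1-2b_n)\log p_{n+1} \ge \log k(n+1)$ with the $p_i$'s growing without constraint), the gap $(1-2b_n)\log(p_{n+1}) - \log k(n+1)$ can be arranged to exceed $D$ for all large $n$. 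Choosing $m$ so that $k(m)$ is past this point gives the claim.

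The one subtlety — and the main obstacle — is making sure the additive constant $D$ is genuinely \emph{uniform}, i.e. independent of $n$: this is legitimate precisely because, by Observation 1, every metric $g_T$ produced by this construction has pinching $-\tfrac19 \ge K_{g_T} \ge -4$, and $D$ (like $C$) was defined as the constant attached to a fixed pinching range $-(1/3)^2 \ge K \ge -2^2$ that comfortably contains this. So $D$ does not vary with $n$, and the height estimate $|t_k - u(|k|)| \le D$ holds with one and the same $D$ for all $k$. Once this is in hand, the argument is just the chain of inequalities above: $t_k \le u(|k|) + D \le \log k(n+1) + D < u(p_{n+1})$ for $n \ge k(m)$, whence by Observation 3 (applied at height $t_k$, which is below $u(p_{n+1})$) the potential $U$ is at most $-\tfrac1n$ everywhere on $[o, p^k o]$.
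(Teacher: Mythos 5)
Your proof follows the same route as the paper's: bound the height of $[o,p^k o]$ by $u(|k|)+D\le u(k(n+1))+D$ using the uniform pinching from Observation 1, compare this with $u(p_{n+1})$ via Observation 4, and conclude with Observation 3. The only place you deviate is in where the additive constant $D$ gets absorbed. You try to absorb it on the Observation 4 side, asserting that $u(p_{n+1})-\log k(n+1)\to\infty$; but hypothesis (3) of the construction only guarantees $(1-2b_n)\log(p_{n+1})\ge \log(k(n+1))$, with no extra slack, so as written this step is an appeal to modifying the construction rather than a deduction from it. The paper absorbs $D$ on the other side: since the construction already assumes $\lim_{t\to\infty}\phi(t)=-\infty$, one has $u(k(n+1))+D=\log k(n+1)+\phi(k(n+1))+D<\log k(n+1)$ once $n$ is large, and then Observation 4 is used exactly as stated. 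Your bound $u(k(n+1))\le\log k(n+1)$ from $\phi\le 0$ is correct but too weak for your version of the argument; replacing it with $\phi(k(n+1))<-D$ for large $n$ closes the gap without touching hypothesis (3). Your emphasis on the uniformity of $D$ (via the fixed pinching $-1/9\ge K_{g_T}\ge -4$) is exactly the right point to insist on.
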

\begin{proof} It follows from {\bf Obs. 1} that if $|k|\le k(n+1)$, then the  height of $[o,p^k o]$ is at most $u(k(n+1))+D$. For $n$ big enough this is less than $\log(k(n+1))$. Using {\bf Obs. 4} we conclude that if $|k|\le k(n+1)$, then the  height of $[o,p^k o]$ is at most $u(p_{n+1})$. Finally {\bf Obs. 3} gives us that $U\le -\frac{1}{n}$ on the geodesic segment $[o,p^k o]$ if $|k|\le k(n+1)$.
\end{proof}

\begin{lemma}\label{lemma4} The critical exponent of $\P=\langle p\rangle$ is equal to $1/2$. Moreover, $\P$ is of divergence type.\end{lemma}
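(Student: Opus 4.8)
The plan is to estimate the Poincaré series of $\P=\langle p\rangle$ based at $o$,
\[
P(s)=\sum_{n\in\Z}\exp\bigl(-s\,d_T(o,p^n o)\bigr),
\]
directly, using the distance comparison $|d_T(o,p^n o)-2u(|n|)|\le C$ recalled from \cite[Section 3]{dop}, where $C$ depends only on the pinching of $g_T$ (which is fixed, $-\tfrac19\ge K_{g_T}\ge -4$, by \textbf{Observation 1}). Since $u(t)=\log t+\phi(t)$, this comparison gives, for every $s\ge 0$ and every $n\neq 0$,
\[
e^{-sC}\,|n|^{-2s}e^{-2s\phi(|n|)}\ \le\ \exp\bigl(-s\,d_T(o,p^n o)\bigr)\ \le\ e^{sC}\,|n|^{-2s}e^{-2s\phi(|n|)} .
\]
Thus $P(s)$ converges if and only if $\sum_{n}|n|^{-2s}e^{-2s\phi(|n|)}$ converges, and the whole computation reduces to controlling the size of $\phi$ at infinity.

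The key point is that $\phi(t)=o(\log t)$. Indeed, on each interval $I_n=[p_{n-1},p_n]$ with $n\ge 3$ the construction of $g_2$ yields $a_n\le g_2(t)\le 2a_{n-1}-1$, and since $a_n=\bigl(1+\tfrac1{(N-1)(n-1)}\bigr)^2\to 1$ we get $g_2(t)\to 1$ as $t\to\infty$. Hence $\phi'(t)=(g_2(t)^{-1/2}-1)/t=o(1/t)$, so for every $\eps>0$ one has $|\phi(t)|\le \eps\log t+O_\eps(1)$, i.e. $e^{-2s\phi(|n|)}=|n|^{o(1)}$. Therefore, if $s>1/2$ the exponent $-2s+o(1)$ is eventually $<-1$ and $\sum_n|n|^{-2s}e^{-2s\phi(|n|)}<\infty$, whence $P(s)<\infty$; this proves $\delta_\P\le 1/2$.

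For the reverse inequality and the divergence-type claim I would use that $\phi$ is non-increasing (because $\phi'\le 0$, as $g_2\ge 1$) and is normalized by $\phi(p_2)=0$; hence $\phi(t)\le 0$ for $t\ge p_2$, so $e^{-2s\phi(|n|)}\ge 1$ for $|n|\ge p_2$. Consequently, for $0\le s\le 1/2$,
\[
P(s)\ \ge\ e^{-sC}\sum_{|n|\ge p_2}|n|^{-2s}e^{-2s\phi(|n|)}\ \ge\ e^{-sC}\sum_{|n|\ge p_2}|n|^{-1}\ =\ \infty .
\]
In particular $P(1/2)=\infty$, which simultaneously gives $\delta_\P\ge 1/2$ and that $\P$ is of divergence type. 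Combined with the previous paragraph, $\delta_\P=1/2$.

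The only genuinely delicate ingredient is the observation $g_2(t)\to 1$ (equivalently $\phi(t)=o(\log t)$); once that is in hand, the rest is a routine comparison of $p$-series. I note that only the order of magnitude $\phi(t)=o(\log t)$ is used here, and not the stronger property $\phi(t)\to-\infty$ built into the construction of $u$, which will matter only in the subsequent analysis of the normalized unstable jacobian.
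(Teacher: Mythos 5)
Your proof is correct and follows essentially the same route as the paper: reduce the Poincar\'e series to $\sum_n \exp(-2su(|n|))$ via the distance comparison from \cite[Section 3]{dop}, use $\phi(t)=o(\log t)$ for convergence when $s>1/2$, and use $\phi\le 0$ (so $u(t)\le\log t$) to compare with the harmonic series at $s=1/2$ for the lower bound and divergence type. The only difference is that you spell out why $\phi(t)=o(\log t)$ (via $g_2(t)\to 1$ on the intervals $I_n$), a step the paper asserts without detail.
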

\begin{proof}As explained in \cite[Section 3]{dop}, the Poincar\'e series of $\P$ for the metric $g_T$ is equivalent to the series $$\sum_{n\in\Z}\exp (-2su(|n|)).$$
Observe that for every $\epsilon>0$ there exists a natural number $N$ such that if $n\ge N$, then we have $\phi(n)>-\epsilon \log(t)$. In particular  
$$\sum_{n\ge N}\exp (-2su(|n|))\le \sum_{n\ge N}\exp (-2s(1-\epsilon)\log(|n|) ).$$
If $s(1-\epsilon)>1/2$, then the right hand side converges. This implies that for every $\epsilon>0$ the following inequality holds: $\delta_\P\le \frac{1}{2(1-\epsilon)}$. On the other hand for big $n$ we have $u(n)<\log(n)$. Then 
$$\sum_{n>N'} \exp (-2s\log(|n|) )<\sum_{n>N'}\exp (-2su(|n|)).$$
Since the critical exponent of the left hand side is $1/2$ (and of divergence type) we get the inequality $\delta_\P\ge 1/2$. We conclude that $\delta_\P=1/2,$  and that $\P$ is of divergence type.

\end{proof}

\begin{lemma}\label{convU} The pair $(\P,U)$ is of convergence type with respect to the metric $g_T$.
 \end{lemma}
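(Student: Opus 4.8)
The plan is to first pin down the critical exponent $\delta_\P^U$ and then estimate the Poincar\'e series of $(\P,U)$ at that value. For all but finitely many $k$, Lemma~\ref{lemma3} (together with Observations 3--4) gives $\widetilde U\le-1/n<0$ along the whole segment $[o,p^k o]$, where $n$ is determined by $k(n)\le|k|<k(n+1)$; hence $\int_o^{p^k o}\widetilde U\le 0$ for those $k$, so the tail of the Poincar\'e series of $(\P,U)$ is dominated term by term by that of $\P$, and Lemma~\ref{lemma4} gives $\delta_\P^U\le\delta_\P=1/2$. For the reverse inequality I would argue as in Lemma~\ref{lem}: since $U$ goes to zero through the cusp (Observation~2), for each $\epsilon>0$ one can take a base point $x$ so deep in the cusp that $|U|<\epsilon$ at the height of $x$ and above; every segment $[x,p^k x]$ remains at that height or higher, so $|\int_x^{p^k x}\widetilde U|\le\epsilon\,d_T(x,p^k x)$ and the Poincar\'e series of $(\P,U)$ at $s$ dominates, up to a bounded factor, that of $\P$ at $s+\epsilon$, which diverges for $s+\epsilon<1/2$. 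Letting $\epsilon\to 0$ gives $\delta_\P^U=1/2$, and it remains to prove $\sum_{k\in\Z^\ast}\exp\big(\int_o^{p^k o}\widetilde U-\tfrac12 d_T(o,p^k o)\big)<\infty$.

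Here I would split $\Z^\ast$ into the blocks $B_n=\{k:k(n)\le|k|<k(n+1)\}$ and estimate the contribution of each $B_n$ with $n\ge k(m)$ ($m$ as in Lemma~\ref{lemma3}); the finitely many remaining $k$ contribute a finite amount. For $k\in B_n$ with $n\ge k(m)$, Lemma~\ref{lemma3} gives $\widetilde U\le-\tfrac1n$ along $[o,p^k o]$, so $\int_o^{p^k o}\widetilde U\le-\tfrac1n d_T(o,p^k o)$ and
$$\exp\Big(\int_o^{p^k o}\widetilde U-\tfrac12 d_T(o,p^k o)\Big)\ \le\ \exp\Big(-\big(\tfrac12+\tfrac1n\big)\,d_T(o,p^k o)\Big);$$
when the crude bound $\widetilde U\le-\tfrac1n$ is not enough I would instead integrate $\widetilde U$ against the lower portions of $[o,p^k o]$, where by Observations 3--4 it is as negative as $-1/j$ at heights below $u(p_{j+1})$, picking up an extra $n$-dependent decay governed by the gaps $u(p_{j+1})-u(p_j)$. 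In either case, feeding in the distance estimate $d_T(o,p^k o)\ge 2u(|k|)-C$ of \cite[Section~3]{dop} and invoking conditions (1)--(4) on $\{p_n\}$ and Observations 1--4, the contribution of $B_n$ is bounded by
$$\exp\Big(\big(\tfrac12+\tfrac1n\big)D\Big)\sum_{|k|\ge k(n)}\exp\Big(-\big(\tfrac12+\tfrac1n\big)2\log|k|\Big)\ \le\ \frac1{n^2},$$
which is exactly the inequality built into the definition of $k(n)$. Summing over $n$ then gives the claim.

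The real content is the estimate of the previous paragraph. The fact that $U$ goes to zero through the cusp only \emph{slowly} buys a gain of size $O(1/n)$ over the borderline exponent $\delta_\P=1/2$, whereas the cyclic parabolic group $\P$ is of divergence type at $1/2$ (Lemma~\ref{lemma4}), so its Poincar\'e series diverges there essentially like the harmonic series. Thus the gain is only marginally sufficient, and it is sufficient precisely because the sequences $\{k(n)\}$, $\{p_n\}$ and the curvature profile $g_2$ were built so that, on each block $B_n$, the geodesics $[o,p^k o]$ stay in the region where $\widetilde U\le-1/n$ (forcing the exponent improvement, via Observations 3--4 and Lemma~\ref{lemma3}) while the distances $d_T(o,p^k o)$ stay large enough — essentially $\ge 2\log|k|-D$, up to the control that conditions (1)--(4) place on $u(|k|)$ versus $\log|k|$ — for the resulting tails to be summable. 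Balancing these two competing demands is what makes the construction of Section~\ref{cons1} so involved; granting it, the argument above is bookkeeping.
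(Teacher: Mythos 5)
Your proposal is correct and takes essentially the same route as the paper: pin down $\delta_\P^U=\delta_\P=1/2$ (the paper does this via Observation 2 and Lemma \ref{lem}), split the series into the blocks $k(n)\le |k|<k(n+1)$, use Lemma \ref{lemma3} to gain the factor $\exp(-\tfrac{1}{n}\,d_T(o,p^k o))$, and combine the distance estimate $|d_T(o,p^k o)-2u(|k|)|\le C$ with the defining inequality of $k(n)$ to bound each block by $1/n^2$. The hedged alternative of integrating $\widetilde{U}$ only over the lower portion of each segment is not needed; the crude bound $\widetilde{U}\le -1/n$ along the whole segment is exactly what the paper uses.
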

\begin{proof}
We denote by $d_T$ the distance function induced by $g_T$. Combining Lemma \ref{lemma3} and the definition of $k(n)$ we get the inequality
\begin{align*}
&\sum_{|k|\ge k(m)}\exp\bigg(\int_o^{p^k o}U-\frac{1}{2} d_T(o,p^k o)\bigg)\\
=&\sum_{n\ge m} \sum^{k(n+1) -1}_{k=k(n)}\exp\bigg(\int_o^{p^k o}U-\frac{1}{2} d_T(o,p^k o)\bigg)\\
\le &\sum_{n\ge m}\sum^{k(n+1) -1}_{k=k(n)}\exp\bigg(-\left(\frac{1}{n}+\frac{1}{2}\right) d_T(o,p^k o)\bigg)\\
\le &\sum_{n\ge m} \sum^{k(n+1) -1}_{k=k(n)}\exp\bigg(-\left(\frac{1}{n}+\frac{1}{2}\right) (2u(|k|))\bigg)\exp\bigg(\left(\frac{1}{n}+\frac{1}{2}\right)D\bigg)\\
\le &\sum_{k=1}\frac{1}{k^2}.
\end{align*}
{\bf Obs. 2} and Lemma \ref{lem} implies that $\delta_\P=\delta_\P^U$ and then by Lemma \ref{lemma4} we know that the series above is exactly the Poincar\'e  series associated to $(\P,U)$. This finishes the proof of the lemma.
\end{proof}

\subsection{Construction of the family $\{M_{n,m}\}_{n,m}$}\label{sub2}
We have now  all the ingredients to construct the Riemannian manifold announced in Theorem \ref{i2}. We will use the notation introduced at the beginning of this section. We start with $(\R^N,g_{hyp})$, where $g_{hyp}$ is the hyperbolic metric, and the function $u$ constructed in Section \ref{cons1}. We choose a hyperbolic isometry $h$ (for the hyperbolic metric) so that $H=\langle h\rangle$ is in Schottky position with respect to $\P=\langle p\rangle$. We moreover assume that the axis of $h$ has height smaller that $u(1/2)$. Define $\Gamma$ as the group generated by $p$ and $h$ and let $M=\R^N/\Gamma$. The closed geodesic associated to $h$ is denoted by $\gamma=\gamma_h$. We can `cut' the cusp associated to $\P$ above height $u(1/2)$ and replace it by the cusp endowed with the metric $g_T$. This is possible because $g_T$ is the hyperbolic metric on the region $\{({\bf x},t):t< u(1)\}$. We have constructed a new Riemannian metric $g$ on $M$. We lift the metric to the universal cover; this is our new Hadamard manifold $(\widetilde{M},g)$. We will check that the Riemannian manifold $(M,g)$ satisfies the properties announced in Theorem \ref{i2}. Since the geometric structure has change we will be careful with our notation. The generator of the  parabolic subgroup of $Iso(\widetilde{M},g)$ corresponding to the cusp is denoted by $p_*$ and $h_*$ is the hyperbolic isometry associated to the closed geodesic $\gamma$ in $M$. The group generated by $p_*$ is denoted by $\P_*$ and the group generated by $h_*$ is denoted by $H_*$. The geometric potential of $M$ is denoted by $F^{su}$, and its normalization by $U$ (see Definition \ref{norm}). As before, we will organize the relevant information in a couple of observations and lemmas. It will be convenient to define $Q=-U$. We start with the following definition.

\begin{definition}\label{man} We denote by $\Gamma_{n,m}$  the group generated by $h_*^n$ and $p_*^m$. Let $M_{n,m}$ be the covering of $M$ associated to the subgroup $\Gamma_{n,m}$ of $\Gamma$. The lift of a potential $G$ on $T^1M$ to $T^1M_{n,m}$ is denoted by $G_{n,m}$.
\end{definition}
\noindent\\
{\bf Observation 5:} The closed geodesic $\gamma$ lies in the region where $g$ is hyperbolic. This implies that $U$ and $Q$ vanish along $\gamma$. 

\noindent\\
{\bf Observation 6:} Since $F^{su}$ is locally defined in terms of $g$ and every local structure is preserved under taking coverings, we conclude that $F^{su}_{n,m}$ is the geometric potential of $M_{n,m}$. 

\noindent\\
{\bf Observation 7:} The potential $Q$ vanishes at infinity and it is positive in a neighborhood of the cusp associated to $\P_*$. This follows directly from {\bf Obs. 2}. 

\begin{lemma}\label{convF}The pair $(\P_*,-tQ)$  is of convergence type for every $t\ge 1$.
\end{lemma}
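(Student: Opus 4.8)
The plan is to reduce the claim about $(\P_*,-tF)$ for the modified metric $g$ to the already-established convergence-type statement for the pair $(\P,U)$ in the model cusp $(\H^N,g_T)$, namely Lemma \ref{convU}. The key point is that, by construction, near the cusp the metric $g$ agrees exactly with $g_T$ (we cut the hyperbolic cusp above height $u(1/2)$ and glued in the $g_T$-cusp, which is hyperbolic for $t<u(1)$), and the parabolic generator $p_*$ of $\mathrm{Iso}(\widetilde{X},g)$ corresponds to the translation $p$ used in Subsection \ref{cons1}. Moreover $F^{su}$ is defined locally from the metric, so on the region where $g=g_T$ the geometric potential of $(X,g)$ restricted to vectors based in the cusp coincides with the one computed in the model, and hence $F=-U_*$ agrees there with $-U$. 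Therefore the Poincar\'e series $\sum_{p_*\in \P_*}\exp\bigl(\int_o^{p_* o}(-tF)-s\,d_g(o,p_* o)\bigr)$ is, term by term, the same as the series $\sum_{n\in\Z}\exp\bigl(t\int_o^{p^n o}U-s\,d_T(o,p^n o)\bigr)$ studied before (up to finitely many initial terms, where the two metrics may differ).

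First I would record that $U$ is bounded and nonpositive near the cusp (indeed $-M\le U\le -L<0$ on the relevant region, by Remark \ref{insp} and Observation 3), so that $-tF=tU\le 0$ there for every $t\ge 1$, and in fact $tU\le U$ pointwise on the cusp since $U\le 0$ and $t\ge 1$. This monotonicity is what lets us bootstrap from $t=1$ to all $t\ge 1$: we have, for vectors along $[o,p^n o]$,
\begin{align*}
\int_o^{p^n o} tU \;\le\; \int_o^{p^n o} U,
\end{align*}
hence each term of the Poincar\'e series for $(\P_*,-tF)$ at the exponent $s=\delta_{\P}=1/2$ is bounded above by the corresponding term of the series for $(\P,U)$ at the same exponent. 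By Lemma \ref{convU} the latter converges, so the former converges, i.e. $(\P_*,-tF)$ is of convergence type for every $t\ge 1$. (One should also note $\delta_{\P_*}=\delta_\P=1/2$ since the cusp of $(X,g)$ is isometric to the model cusp, and $\delta_{\P_*}^{-tF}=\delta_{\P_*}$ by Lemma \ref{lem}, because $-tF=tU\to 0$ through the cusp by Observation 2; so the exponent $1/2$ is indeed the critical exponent at which convergence must be checked.)

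The only genuinely delicate point is matching up the geometric data in the cusp of $(X,g)$ with the model computation: one must check that the geodesic segments $[o,p_*^n o]$ in $(\widetilde{X},g)$ that stay in the cusp are exactly the model segments $[o,p^n o]$ in $(\H^N,g_T)$, with the same lengths and the same heights, and that the lift of $U_*$ along them equals $U$. This is where Observations 5 and 6 and Bowditch's standard-cusp description (used implicitly via the identification of a neighborhood of the end with $B_\xi(s)/\P$) do the work; segments that exit the cusp contribute only finitely many terms and are harmless. I expect this bookkeeping — rather than any estimate — to be the main obstacle, and it is dispatched by the explicit gluing construction of Subsection \ref{sub2}.
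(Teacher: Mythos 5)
Your overall strategy is the paper's: identify the Poincar\'e series of $(\P_*,-tF)$ with the model series of Lemma \ref{convU} (using that the glued metric coincides with $g_T$ on the cusp and that horoballs are convex, so the segments $[o,p_*^k o]$ and their lengths agree with the model ones), and then dominate the $-tF$ series termwise by the $-F$ series using $t\ge 1$ and the sign of $U$. However, the pointwise inequality you rely on — ``$-M\le U\le -L<0$ on the relevant region, hence $tU\le U$ pointwise on the cusp'' — is false on part of the region your segments traverse. The construction of $g_2$ only guarantees $g_2(t)-\tfrac{t}{2}g_2'(t)\ge 1/3$ for small $t$ (property (3)), and on the band of heights just above $u(1)$ one in fact has $K>-1$, hence $U>0$ there; the paper states this explicitly in Subsection \ref{sub2} (``$\widetilde F$ is negative precisely at vectors whose base lies in the interior of $B_\xi(s)\setminus B_\xi(s+L)$''). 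Since your base point $o$ sits at height $u(1)$, every segment $[o,p^k o]$ crosses this band, and on it $tU>U$ for $t>1$, so the termwise domination does not follow as written.

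The repair is exactly the step the paper inserts: move the base point to a point $o'$ above height $u(p_2)$, where property (4) of $g_2$ together with Remark \ref{insp} gives $U\le -\tfrac1n<0$, and use convexity of horoballs to keep the segments $[o',p^k o']$ in that region; then $\int_{o'}^{p^k o'}(-tF)\le\int_{o'}^{p^k o'}(-F)$ for all $t\ge1$ and the comparison with Lemma \ref{convU} (whose conclusion is independent of the base point, as is the convergence type for H\"older potentials) goes through. With that one correction — which you partly anticipate in your final paragraph about ``matching up the geometric data'', but do not actually carry out — your proof coincides with the paper's.
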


\begin{proof} The reference point used in the proof of Lemma \ref{convU} lies in the piece of $M$ coming from $(\R^N,g_T)$. Since horoballs are convex it follows that $d_T=d$ on that region. This implies that the series estimated in Lemma \ref{convU} is exactly the Poincar\'e series associated to $\P_*$, which implies that $(\P_*,-Q)$ is of convergence type. By the construction of $g_2$ we know that $Q$ is positive above height $u(2)$ (see {\bf Obs. 7}). Change the reference point to $o'$, a point with height $u(2)$. By convexity of this region and the definition of $g_2$ we get that $-t\int_{o'}^{p^ko'}Q\le -\int_{o'}^{p^ko'}Q$, for every $t\ge 1$. Plugging this into the Poincar\'e series of $-tF$ and $-F$ implies the lemma.
\end{proof}

The idea now is to apply Theorem \ref{teo6} to the potential $Q=-U$. Lemma \ref{convF} and {\bf Obs. 7} implies that $Q$ belongs to the class $\F_s$. Despite that $Q$ is not non-negative, by {\bf Obs. 5} and  the discussion at the end of Section \ref{pha},  we know that $F_n$ exhibits a phase transition for sufficiently big $n$. This is not a very satisfactory answer, one would like to know which type of phase transition we encountered, either type A or type B phase transition. By the construction of the modified metric at the cusp, a phase transition of type B seems unlikely to occur. Since we are not able to completely rule out that case, we present an argument to justify that a type A phase transition is always possible to construct. As before, we denote by $\widetilde{Q}$ to the lift of $Q$ to the universal cover of $M$. Denote by $\xi\in \partial_\infty \widetilde{M}$ to the parabolic fixed point of $\P_*$. By construction of $g_2$ we know that there exist real numbers $s$ and $L$ such that $\widetilde{Q}$ is negative precisely at vectors whose  base lies in the interior of $B_\xi(s)\setminus B_\xi(s+L)$ and it is zero for vectors with base in $M\setminus B_\xi(s)$. It follows easily from this fact that there exists $m_0$ such that if $m\ge m_0$, then 
\begin{align}\label{y} \int_o^{g.o} \widetilde{Q}>0,\end{align}
 for every $g\in \Gamma_{1,m}$, in particular for every $g\in \Gamma_{n,m}$. It follows from the definition of the Poincar\'e series and the fact that $Q$ goes to zero through the cusp of $M$ (see {\bf Obs. 2}), that if the pair  $(\Gamma_{n,1},-Q_{n,1})$ is of convergence type with $\delta_{\Gamma_{n,1}}^{-Q_{n,1}}=\delta_\P$, then the same holds for $(\Gamma_{n,m},-Q_{n,m})$. Moreover, if $m\ge m_0$, then inequality (\ref{y}) and the definition of the Poincar\'e series implies that if $(\Gamma_{n,m},-Q_{n,m})$  is of convergence type with $\delta_{\Gamma_{n,m}}^{-Q_{n,m}}=\delta_\P$, then the same holds for  $(\Gamma_{n,m},-tQ_{n,m})$, for every $t\ge 1$. Combining {\bf Obs. 6} and the discussion above we conclude that $Q_{n,m}$ admits a phase transition of type A for big enough $n$ and $m$.

\subsection{Conclusions}
In  Section \ref{sub2} we constructed a family of geometrically finite negatively curved manifolds $M_{n,m}$ (see Definition \ref{man}) for which $Q_{n,m}$ exhibits a phase transition of type A if $n$ and $m$ are big enough. Since 
$$tQ_{n,m}=-tF_{n,m}^{su}-t(N-1),$$
 the existence of an equilibrium state for $tQ_{n,m}$ is equivalent to the existence of one for $-tF_{n,m}^{su}$. 
Just as in Theorem \ref{teo6}, there exists $t_{n,m}\in [-1,0)$ such that $tQ_{n,m}$ has an equilibrium state for all $t>t_{n,m}$, and there is not equilibrium state for $t<t_{n,m}$. We remark that $F_{n,m}^{su}$ does not admit an equilibrium state since $(\Gamma_{n,m},-Q_{n,m})=(\Gamma, F_{n,m}^{su}+(N-1))$ is of convergence type.  It is clear from the construction that $M_{n,m}$ are extended Schottky manifolds. By Lemma \ref{lemma4} we know that $\P_*$ and $\langle p_*^n\rangle$ are  of divergence type, then using Theorem \ref{gapdiv} we obtain $\delta_{\langle p_*^n\rangle}<\delta_{\Gamma_{n,m}}$. We conclude that  the geodesic flow on $M_{n,m}$ has a measure of maximal entropy. All this together gives us the following result.

\begin{theorem}\label{fin} Each $M_{n,m}$ is an extended Schottky manifold and $F^{su}_{n,m}$ is the geometric potential of $M_{n,m}$. Suppose that $n$ and $m$ are sufficiently large. Then there exists $s_{n,m}\in (0,1]$ such that the following holds:
\begin{enumerate}
\item $tF^{su}_{n,m}$ has an equilibrium state for $t<s_{n,m}$. 
\item $tF^{su}_{n,m}$ does not has an equilibrium state for $t>s_{n,m}$.
\item The pressure map is linear in $(s_{n,m},\infty)$.  
\end{enumerate}
\end{theorem}

\begin{center}
\begin{tikzpicture}\label{fig4}


\draw[color=gray!50] [dashed]plot[smooth] coordinates {(-2,0.5) (-3.5,0.5)}; 

\draw[red] plot[smooth] coordinates {(-2,0.5) (-2.8,1.2)(-3.7,2.7) (-4,4)};

\draw [>= stealth, ->](-6,1)--(1,1) ;
\draw (1,1) node[below]{$t$};

\draw [>= stealth, ->](-3.5,-0.2)--(-3.5,4);
\draw (-3.5,4) node[right]{$P(tF_{n,m}^{su})$};

\draw (-3.5,2.3) node{$\bullet$} node[right]{$\delta_{\Gamma_{n,m}}$};


\draw[black] plot[smooth] coordinates {(0,-0.1)(-2,0.5)};

\node  at (-2.1,-1) {Figure 3: Phase transition for $P(tF_{n,m}^{su})$};

\draw (-2,0.5) node{$\bullet$};
\draw (-2,1) node{$\bullet$} node[above right]{\footnotesize $s_{n,m}=-t_{n,m}$};
\draw (-3.5,0.5) node{$\bullet$} node[left]{\footnotesize $\frac{1}{2}-s_{n,m}(N-1)$};
\end{tikzpicture}
\end{center}

As mentioned in Remark  \ref{generalized} we could have done the same construction for an extended Schottky manifold with one parabolic and arbitrary number of hyperbolic generators (modulo taking big powers of the parabolic and hyperbolic generators). Combining Remark \ref{rem5}  and the proof of Theorem \ref{fin} we obtain the following result.

\begin{corollary}\label{cor3} Let $M$ be a $k$-punctured sphere with $k\ge 3$. We can endow $M$ with a complete Riemannian metric with pinched negative sectional curvature, such that the geometric potential exhibits a phase transition of type A. Moreover, the geodesic flow of $M$ has an unique measure of maximal entropy, and the Riemannian metric is hyperbolic outside a neighborhood of one of the punctures.
\end{corollary}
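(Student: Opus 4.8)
The proof runs entirely parallel to the construction of Section \ref{cons} and the proof of Theorem \ref{7}; the only change is that we start from an extended Schottky group with more hyperbolic generators and invoke Remark \ref{generalized} in place of Proposition \ref{exp}. First I would take $N=2$ and realize the $M$-punctured sphere as $\H^2/\Gamma$, where $\Gamma=\langle h_1,\dots,h_{M-2},p\rangle$ is an extended Schottky group with $N_1=M-2$ hyperbolic generators and $N_2=1$ parabolic generator (the requirements $N_1+N_2\ge2$ and $N_2\ge1$ hold since $M\ge3$); such a $\Gamma$ may be chosen so that $\H^2/\Gamma$ is homeomorphic to the $M$-punctured sphere, with the end coming from $\langle p\rangle$ being a cusp and the remaining ends funnels. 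I would also choose $h_1,\dots,h_{M-2}$ so that each of their axes lies in the region $\{t<u(1/2)\}$, exactly as the single $h$ was chosen in Subsection \ref{sub2}.

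Next I would perform the cusp surgery of Subsection \ref{cons1} verbatim at the unique cusp: the functions $g_2,\phi,u$ and the metric $g_T$ depend only on the parabolic subgroup, so their construction is unaffected. Gluing $g_T$ into the cusp above height $u(1/2)$ produces a complete metric $g$ on $X$ with $-\tfrac19\ge K_g\ge-4$ (\textbf{Obs. 1}) which is hyperbolic outside a neighbourhood of the cusp and for which $U_*=F^{su}+1$ goes to zero through the cusp (\textbf{Obs. 2}). Since every closed geodesic $\gamma_{h_i}$ lies in the hyperbolic region, the potential $F:=-U_*$ vanishes along each of them (the analogue of \textbf{Obs. 5}), and Lemmas \ref{convU} and \ref{convF} carry over unchanged, showing that $(\P_*,-tF)$ is of convergence type for every $t\ge1$; together with \textbf{Obs. 2} this says $F\in\F_s$.

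Then I would run the argument of Subsection \ref{sub2}. By Remark \ref{generalized}, applied to $\{h_{*,1},\dots,h_{*,M-2}\}$ together with $\P_*$, there is $N_0$ such that for all $n\ge N_0$ the group $\Gamma_{n,m}$ generated by $h_{*,1}^n,\dots,h_{*,M-2}^n$ and $p_*^m$ satisfies $\delta^{F}_{\P_*}=\delta^{F_{n,m}}_{\Gamma_{n,m}}$ and $(\Gamma_{n,m},F_{n,m})$ is of convergence type; moreover $X_{n,m}$, having free fundamental group of rank $M-1$ with exactly one parabolic generator, is again homeomorphic to an $M$-punctured sphere with a single cusp, and $g$ lifts to it. The type A argument of Subsection \ref{sub2} then applies word for word: using $F\in\F_s$, the vanishing of $F$ along the $\gamma_{h_i}$, and the positivity estimate $\int_o^{g.o}\widetilde F>0$ for $g\in\Gamma_{1,m}$ (valid once $m$ is large), one concludes — via \textbf{Obs. 6} and the substitution $a_n\mapsto a_{2n}$ used in the proof of Theorem \ref{7} — that the map $t\mapsto P(tF^{su}_{n,m})$ exhibits a phase transition as in Figure 4, with critical time $t_{F_{n,m}}\in[-1,0)$. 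Finally, $\P_*$ and $\langle p_*^m\rangle$ are of divergence type by Lemma \ref{lemma4}, so Theorem \ref{gap1} yields the critical gap $\delta_{\langle p_*^m\rangle}<\delta_{\Gamma_{n,m}}$ at the unique cusp; this is exactly the condition ensuring that the Bowen--Margulis measure of $\Gamma_{n,m}$ is finite, whence Theorem \ref{op} furnishes a unique measure of maximal entropy. Taking $X=X_{n,m}$ for large $n,m$ completes the proof.

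The main obstacle is expected to be organizational rather than substantive: one must verify that the $M-2$ hyperbolic axes can be pushed simultaneously into $\{t<u(1/2)\}$ while preserving pairwise Schottky position of $\{h_1,\dots,h_{M-2},p\}$, and that the quotients $X_{n,m}$ are genuinely homeomorphic to the $M$-punctured sphere with a single cusp. Both are handled by the disjointness built into the extended Schottky definition and by the freedom to conjugate, and no estimate beyond those already established in Section \ref{cons} is needed.
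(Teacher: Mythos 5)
Your proposal is correct and follows essentially the same route as the paper: the paper proves Corollary \ref{cor3} precisely by combining Remark \ref{generalized} (to handle $M-2$ hyperbolic generators in place of one), the cusp modification of Section \ref{cons}, Theorem \ref{7}, and the divergence-type critical gap from Lemma \ref{lemma4} and Theorem \ref{gap1} for the unique measure of maximal entropy. The additional details you supply (pushing the hyperbolic axes below height $u(1/2)$, identifying $X_{n,m}$ topologically) are exactly the points the paper leaves implicit via Remark \ref{rem5}.
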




\end{document}